\begin{document}
\newcommand{\bea}{\begin{eqnarray}}
\newcommand{\eea}{\end{eqnarray}}
\newcommand{\nn}{\nonumber}
\newcommand{\bee}{\begin{eqnarray*}}
\newcommand{\eee}{\end{eqnarray*}}
\newcommand{\lb}{\label}
\newcommand{\nii}{\noindent}
\newcommand{\ii}{\indent}
\newtheorem{theorem}{Theorem}[section]
\newtheorem{example}{Example}[section]
\newtheorem{counterexample}{Counterexample}[section]
\newtheorem{corollary}{Corollary}[section]
\newtheorem{definition}{Definition}[section]
\newtheorem{lemma}{Lemma}[section]
\newtheorem{remark}{Remark}[section]
\newtheorem{proposition}{Proposition}[section]
\renewcommand{\theequation}{\thesection.\arabic{equation}}
\renewcommand{\labelenumi}{(\roman{enumi})}
\title{\bf Weighted fractional generalized cumulative past entropy and its properties}
\author{Suchandan {\bf Kayal}$^{1}$\thanks{Corresponding author : Suchandan Kayal (kayals@nitrkl.ac.in,~suchandan.kayal@gmail.com)}~ and N. {\bf Balakrishnan}$^{2}$\thanks{bala@mcmaster.ca}}

\maketitle
\maketitle \noindent {\it $^{1}$Department of Mathematics, National Institute of
	Technology Rourkela, Rourkela-769008, Odisha, India} \\
{\it $^{2}$Department of Mathematics and Statistics,
	McMaster University, Hamilton, Ontario L8S 4K1,
	Canada}
\date{}
\maketitle
\begin{center}
{\large \bf Abstract}
\end{center}
In this paper, we introduce weighted fractional generalized cumulative past entropy of a nonnegative absolutely continuous random variable with bounded support. Various properties of the proposed weighted fractional measure are studied. Bounds and stochastic orderings are derived. A connection between the proposed measure and the left-sided Riemann-Liouville fractional integral is established. Further, the proposed measure is studied for the proportional reversed hazard rate models. Next, a nonparametric estimator of the weighted fractional generalized cumulative past entropy is suggested based on empirical distribution function. Various examples with a real life data set are considered for the illustration purposes. Finally, large sample properties of the proposed empirical estimator are studied.
\\
\\
{\large \bf Keywords:} Weighted generalized cumulative past entropy; Fractional calculus; Stochastic ordering; Reversed hazard rate model; Empirical cumulative distribution function; Central limit theorem.
\\
\\
{\large \bf 2020 Mathematics Subject Classifications:}  $94A17$, $60E15$, $26A33$
\section{Introduction}
Entropy plays an important role in several areas of statistical mechanics and information theory. In statistical mechanics, the most widely applied form of entropy was proposed by Boltzmann and Gibbs, and in information theory, that was introduced by Shannon. Due to the growing applicability of the entropy measures, various generalizations were proposed and their information theoretic properties
were studied. See, for instance, \cite{renyi1961measures} and \cite{tsallis1988possible}. We recall that most of the generalized entropies were developed based on the concept of deformed logarithm. But, two generalized concept of entropies: fractal (see \cite{wang2003extensive}) and fractional (\cite{ubriaco2009entropies}) entropies were proposed based on the natural logarithm.  Let $P=(p_1,\ldots,p_n)$ be the probability mass function of a discrete random variable $X.$ The  Boltzmann-Gibbs-Shannon entropy of $X$ can be defined through an equation involving the ordinary derivative as
\begin{eqnarray}\label{eq1.1}
H_{BGS}(X)=-\frac{d}{du}\sum_{i=1}^{n}p_{i}^{u}\Big|_{u=1}.
\end{eqnarray}
\cite{ubriaco2009entropies} proposed a new entropy measure known as the fractional entropy after replacing the ordinary derivative by the Weyl fractional derivative (see \cite{ferrari2018weyl}) in (\ref{eq1.1}).  It is given by
\begin{eqnarray}\label{eq1.2}
H_{\alpha}(X)=\sum_{i=1}^{n}p_{i}(-\ln p_{i})^{\alpha},~~0\le \alpha\le1.
\end{eqnarray}
The fractional entropy in (\ref{eq1.2}) is positive, concave and non-additive. Further, one can recover the Shannon entropy (see \cite{shannon1948note}) from (\ref{eq1.2}) under $\alpha=1.$ From (\ref{eq1.2}), we notice that the measure of information is mainly a function of probabilities of occurrence of various events. However, we often face with many situations (see \cite{guiacsu1971weighted}) in different fields, where the probabilities and qualitative characteristics of events need to be taken into account for better uncertainty analysis. As a result, the concept of weighted entropy was introduced by \cite{guiacsu1971weighted}, which is given by
\begin{eqnarray}\label{eq1.3}
H^{w}(X)=-\sum_{i=1}^{n}w_{i}p_{i}\ln p_{i},
\end{eqnarray}
where $w_{i}$ is a nonnegative number  (known as weight) directly proportional to the importance of the $i$th elementary event. Note that the weights $w_{i}$'s can be equal. Following the same line as in (\ref{eq1.3}), the weighted fractional entropy can be defined as
\begin{eqnarray}\label{eq1.4}
H_{\alpha}^{w}(X)=\sum_{i=1}^{n}w_{i}p_{i}(-\ln p_{i})^{\alpha},~~0\le \alpha\le1.
\end{eqnarray}
Note that for $w_{i}=1,~i=1,\ldots,n$, (\ref{eq1.4}) reduces to the fractional entropy given by (\ref{eq1.2}). Further, (\ref{eq1.4}) equals to the weighted entropy given by (\ref{eq1.3}) when $\alpha=1.$

Recently, motivated by the aspects of the cumulative residual entropy due to \cite{rao2004cumulative} and the fractional entropy given by (\ref{eq1.2}), \cite{xiong2019fractional} introduced a new information measure, known as the fractional cumulative residual entropy. The concept of multiscale fractional cumulative residual entropy was described by \cite{dong2020multiscale}. Very recently, inspired by the cumulative entropy (see \cite{di2009cumulative}) and (\ref{eq1.2}), \cite{di2021fractional} proposed fractional generalized cumulative entropy of a random variable $X$ with bounded support $(0,s)$, which is given by
\begin{eqnarray}\label{eq1.5}
CPE_{\gamma}(X)=\frac{1}{\Gamma(\gamma+1)}\int_{0}^{s} K(x)[-\ln K(x)]^{\gamma}dx,~~\gamma>0,
\end{eqnarray}
where $K$ is the cumulative distribution function (CDF) of  $X.$ The fractional generalized cumulative entropy is a generalization of the cumulative entropy and generalized cumulative entropy proposed by \cite{di2009cumulative} and \cite{kayal2016generalized}, respectively. We remark that the cumulative entropy and generalized cumulative entropy are independent of the location. This property appears as a drawback when quantifying information of an electronics device or a neuron in different intervals having equal widths. Thus, to cope with these situations, various authors proposed length-biased (weighted) information measures. The weighted measures are also called shift-dependent measures by some researchers.  Readers may refer to \cite{di2007weighted}, \cite{misagh2011weighted}, \cite{misagh2016shift}, \cite{das2017weighted}, \cite{kayal2017weighteda}, \cite{kayal2017weighted}, \cite{mirali2017weighted}, \cite{nourbakhsh2017weighted}, \cite{kayal2018weighted} and \cite{kayal2019shift} for some weighted versions of various information measures. The existing weighted information measures and the fractional generalized cumulative entropy in (\ref{eq1.5}) inspire us to consider the weighted fractional generalized cumulative past entropy (WFGCPE), which has been studied in the subsequent sections of this paper. The following definitions will be useful in order to obtain some ordering results for the WFGCPE.
\begin{definition}
	Let $X_{1}$ and $X_{2}$ be two nonnegative absolutely continuous random variables with probability density functions (PDFs) $k_{1},~k_{2}$ and CDFs $K_{1},~K_{2}$, respectively. Then, $X_{1}$ is said to be smaller than $X_{2}$ in the sense of the
	\begin{itemize}
		\item[(i)] usual stochastic order, denoted by $X_{1}\le_{st}X_{2}$, if $K_{2}(x)\le K_{1}(x)$, for all $x\in \mathbb{R}$;
		\item[(ii)] hazard rate order, denoted by $X_{1}\le_{hr} X_{2}$, if $\bar{K}_{2}(x)/\bar{K}_{1}(x)$ is nondecreasing in $x>0,$ where $\bar{K}_{1}=1-K_{1}$ and $\bar{K}_{2}=1-K_{2};$
		\item[(ii)] dispersive order, denoted by $X_{1}\le_{disp}X_{2}$, if  $K_{1}^{-1}(u)-K_{1}^{-1}(v)\ge K_{2}^{-1}(u)-K_{2}^{-1}(v)$, for all $0<u<v<1$, where $K_{1}^{-1}$ and $K_{2}^{-1}$ are the right continuous inverses of $K_1$ and $K_2$, respectively;
		\item[(iii)] decreasing convex order, denoted by $X_{1}\le_{dcx}X_{2},$ if and only if $E(\tau(X_{1}))\le E(\tau(X_{2}))$ holds for all nonincreasing convex real valued functions for which the expectations are defined.		 
	\end{itemize}
\end{definition}

The rest of the paper is organized as follows. In Section $2$, we introduce WFGCPE and study its various properties. Some ordering results are obtained. It is shown that a less dispersed distribution produces smaller uncertainty in terms of the WFGCPE. Some bounds are obtained. Further, a connection of the proposed measure with the fractional calculus is discovered. The proportional reversed hazard model is considered and the WFGCPE is studied under this set up. Section $3$ deals with the estimation of the introduced measure. An empirical WFGCPE estimator is  proposed based on the empirical distribution function. Further, large sample properties of the proposed estimator have been studied. Finally, Section $4$ concludes the paper with some discussions.

Throughout the paper, the random variables are considered as nonnegative random variables. The terms increasing and decreasing are used in wide sense. The differentiation and integration exist whenever they are used. The notation $\mathbb{N}$ denotes the set of natural numbers. Further, throughout the paper, a standard argument $0=0.\ln 0=0.\ln \infty$ is adopted. The prime $\prime$ denotes the first order ordinary derivatve.

\section{Weighted fractional generalized cumulative past entropy\setcounter{equation}{0}}
In this section, we propose WFGCPE and study its various properties. Consider a nonnegative absolutely continuous random variable $X$ with support $(0,s)$ and CDF $K$ and PDF $k$. Then, the WFGCPE of $X$ with a general nonnegative weight function $\psi(x)~(\ge0)$ is defined as
\begin{eqnarray}\label{eq2.1}
CPE_{\gamma}^{\psi}(X)=\frac{1}{\Gamma(\gamma+1)}\int_{0}^{s}\psi(x)K(x)[-\ln K(x)]^{\gamma}dx,~~\gamma>0,
\end{eqnarray}
provided the right-hand-side integral is finite, where $\Gamma$ is a gamma function. From (\ref{eq2.1}), one can easily notice that the information measure $CPE_{\gamma}^{\psi}(X)$ is always nonnegative. It is equal to zero when $X$ is degenerate. Note that the WFGCPE is nonadditive. We recall that an information measure $H$ is additive if
\begin{eqnarray}\label{eq2.2}
H(A+B)=H(A)+H(B),
\end{eqnarray}
for any two probabilitically independent systems $A$ and $B$. If (\ref{eq2.2}) is not satisfied, then the information measure is said to be nonadditive. Several information measures have been proposed in the literature since the introduction of the Shannon entropy. Among those, probably Shannon's entropy and Renyi's entropy (see \cite{renyi1961measures}) are additive and all other generalizations (see, for example, \cite{tsallis1988possible}) are nonadditive. For $\gamma\in \mathbb{N}$ and $s\rightarrow+\infty$, $CPE_{\gamma}^{\psi}(X)$ reduces to the weighted generalized cumulative entropy proposed by \cite{tahmasebi2020extension}. Further, when $\psi(x)=1$, we get the fractional generalized cumulative entropy due to \cite{di2021fractional}. Let $\Psi'(x)=\frac{d}{dx}\Psi(x)=\psi(x).$ Then, when $\gamma\rightarrow 0^{+}$  and $ 0<s<+\infty$, we have from (\ref{eq2.1}),
\begin{eqnarray}
CPE_{\gamma}^{\psi}(X)
&=&\int_{0}^{s}\psi(x)dx-\int_{0}^{s}\psi(x)\bar{K}(x)dx\nonumber\\
&=&\Psi(s)-\Psi(0)-\int_{0}^{s}\psi(x)\left(\int_{x}^{s}k(y)dy\right)dx\nonumber\\
&=& \Psi(s)-\Psi(0)-\int_{0}^{s}\int_{0}^{y}\psi(x)k(y)dx dy\nonumber\\
&=& \Psi(s)-E[\Psi(X)].
\end{eqnarray}
Thus,
\begin{eqnarray}
CPE_{\gamma}^{\psi}(X)
=\left\{
\begin{array}{ll}
\Psi(s)-E[\Psi(X)], & \mbox{if}~ \gamma\rightarrow 0^{+}\mbox{ and}~ 0<s<+\infty\\
+\infty, & \mbox{if}~ \gamma\rightarrow 0^{+}\mbox{ and}~ s=+\infty.
\end{array}
\right.
\end{eqnarray}
Moreover, in particular, for $\psi(x)=x$, we have
\begin{eqnarray}
CPE_{\gamma}^{\psi}(X)
=\left\{
\begin{array}{ll}
\frac{1}{2}\left[s^2-E(X^2)\right], & \mbox{if}~ \gamma\rightarrow 0^{+}\mbox{ and}~ 0<s<+\infty\\
CPE_{n}^{\psi(x)=x}(X), &  \mbox{if}~ \gamma=n\in\mathbb{N}~\mbox{and}~s=+\infty\\
CPE^{\psi(x)=x}(X), &  \mbox{if}~ \gamma=1~\mbox{and}~s=+\infty\\
+\infty, & \mbox{if}~ \gamma\rightarrow 0^{+}\mbox{ and}~ s=+\infty,
\end{array}
\right.
\end{eqnarray}
where $CPE_{n}^{\psi(x)=x}(X)$ and $CPE^{\psi(x)=x}(X)$ are the shift-dependent generalized cumulative past entropy of order $n$ (see Eq. (1.4) of \cite{kayal2019shift})  and weighted cumulative past entropy (see Eq. (10) of \cite{misagh2016shift}), respectively.

Next, we consider an example to show that even though the fractional generalized cumulative past entropy of two distributions are same, but the WFGCPEs are not same.
\begin{example}\label{ex2.1}
	Consider two random variables $X_{1}$ and $X_{2}$ with respective CDFs $K_{1}(x)=x-a,~0<a<x<a+1$ and $K_{2}(x)=x-(a+1),~a+1<x<a+2<+\infty.$ Then, the fractional generalized cumulative past entropy of $X_{1}$ and $X_{2}$ can be obtained respectively as
	$$CPE_{\gamma}(X_{1})=\frac{1}{2^{\gamma+1}}=CPE_{\gamma}(X_{2}),~\gamma>0.$$
	That is, the fractional generalized cumulative past entropy of $X_{1}$ and $X_{2}$ are same. Indeed, it is expected since the fractional generalized cumulative past entropy is shift-independent (see Propositon $2.2$ of \cite{di2021fractional}). In order to reach to the goal, let us consider $\psi(x)=x.$ Then,
	 $$CPE_{\gamma}^{\psi(x)=x}(X_1)=\frac{1}{3^{\gamma+1}}+\frac{a}{2^{\gamma+1}}~\mbox{and}~CPE_{\gamma}^{\psi(x)=x}(X_2)=\frac{1}{3^{\gamma+1}}+\frac{a+1}{2^{\gamma+1}},$$
	which show that the WFGCPEs of $X_{1}$ and $X_{2}$ are not same. Here, $CPE_{\gamma}^{\psi}(X_1)< CPE_{\gamma}^{\psi}(X_2)$. Further, let $\psi(x)=x^2.$ Thus, we have
	 $$CPE_{\gamma}^{\psi(x)=x}(X_1)=\frac{1}{4^{\gamma+1}}+\frac{2a}{3^{\gamma+1}}+\frac{a^2}{2^{\gamma+1}}~\mbox{and}~CPE_{\gamma}^{\psi(x)=x}(X_2)=\frac{1}{4^{\gamma+1}}+\frac{2(a+1)}{3^{\gamma+1}}+\frac{(a+1)^2}{2^{\gamma+1}},$$
	which also reveal that the WFGCPEs of $X_{1}$ and $X_{2}$ are different from each other.
\end{example}
From Example \ref{ex2.1}, we notice that when ignoring qualitative characteristic of a given data set, the fractional generalized cumulative past entropy of two distributions are same, as treated from the quantitative point of view. However, when we do not ignore it, they are not same.  In Table $1$, we provide closed form expressions of the WFGCPE of various distributions for two choices of $\psi(x)$. Let $K$ and $\bar{K}=1-K$ be the distribution and survival functions of a symmetrically distributed random variable with bounded support $(0,s)$. \cite{di2021fractional} showed that for this symmetric random variable the fractional generalized cumulative residual entropy and the fractional generalized cumulative entropy are same. However, this property does not hold for the weighted versions of the fractional generalized cumulative residual entropy and fractional generalized cumulative entropy. Indeed,
\begin{eqnarray}
CPE_{\gamma}^{\psi}(X)=\frac{1}{\Gamma(\gamma+1)}\int_{0}^{s}\psi(s-x)\bar{K}(x)[-\ln\bar{K}(x)]^{\gamma}dx,~~\gamma>0.
\end{eqnarray}
Particularly, for a symmetric random variable $X$ with $\psi(x)=x$, we have
\begin{eqnarray}
CPE_{\gamma}^{\psi}(X)&=&\frac{s}{\Gamma(\gamma+1)}\int_{0}^{s}\bar{K}(x)[-\ln\bar{K}(x)]^{\gamma}dx-\frac{1}{\Gamma(\gamma+1)}\int_{0}^{s}x\bar{K}(x)[-\ln\bar{K}(x)]^{\gamma}dx\nonumber\\
&=&s CRE_{\gamma}(X)-CRE_{\gamma}^{\psi(x)=x}(X),~\mbox{say},
\end{eqnarray}
where $CRE_{\gamma}(X)$ and $CRE_{\gamma}^{\psi(x)=x}(X)$ are respectively known as the fractional generalized cumulative residual entropy and weighted fractional generalized cumulative residual entropy. \cite{di2021fractional} showed that the fractional generalized cumulative entropy of a nonnegative random variable is shift-independent. 

\cite{golomb1966information} proposed an information generating (IG) function for a PDF $k$ as 
\begin{eqnarray}
G_{\beta}(k)=\int_{0}^{\infty}k^{\beta}(x)dx,~~\beta>0.
\end{eqnarray}
The derivatives of this IG function with respect to $\beta$ at $\beta=1$ yield statistical information measures for a probability distribution. For example, the first order derivative of $G_{\beta}(k)$ with respect to $\beta$ at $\beta=1$ produces negative Shannon entropy measure. For detailed properties of the Shannon entropy, please refer to \cite{shannon1948note}. Very recently, \cite{kharazmi2021jensen} considered the IG function and discussed some new properties that reveal its connections to some other well-known information measures. The authors have also  shown that the IG measure can be expressed based on different orders of fractional Shannon entropy. \cite{kharazmi2021information} studied IG function and relative IG function measures associated with maximum and minimum ranked set sampling schemes with unequal sizes. Along the similar lines, here we define a weighted cumulative past entropy generating function as 
\begin{eqnarray}
G_{\beta}(K)=\int_{0}^{s}\psi(x)K^{\beta}(x)dx, ~~\beta>0,
\end{eqnarray}
where $\psi(x)$ is a positive valued weight function. Clearly, 
\begin{eqnarray}
\frac{d}{d\beta}G_{\beta}(K)|_{\beta=1}=\int_{0}^{s}\psi(x)K(x)\ln K(x) dx=-CPE_{\gamma=1}^{\psi}(X).
\end{eqnarray}
Indeed, higher order derivatives of $G_{\beta}(K)$ yield higher order weighted cumulative past entropy measues.

In the following proposition, we establish that the WFGCPE is shift-dependent. This makes the proposed weighted fractional measure useful in context-dependent situations.
\begin{proposition}
	Let $Y=aX+b,$ where $a>0$ and $b\ge0.$ Then,
	\begin{eqnarray}\label{eq2.7}
	CPE_{\gamma}^{\psi}(Y)=\frac{a}{\Gamma(\gamma+1)}\int_{0}^{s}\psi(ax+b)K(x)[-\ln K(x)]^{\gamma}dx,~\gamma>0.
	\end{eqnarray}
\end{proposition}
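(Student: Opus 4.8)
The plan is to compute $CPE_{\gamma}^{\psi}(Y)$ directly from the definition in (\ref{eq2.1}) by first expressing the distribution of $Y$ in terms of that of $X$ and then performing a linear change of variable in the defining integral. Since $a>0$, the map $x\mapsto ax+b$ is strictly increasing, so the CDF of $Y$ is
\begin{eqnarray*}
K_Y(y)=P(aX+b\le y)=P\left(X\le \frac{y-b}{a}\right)=K\left(\frac{y-b}{a}\right),
\end{eqnarray*}
and, because $X$ has support $(0,s)$, the variable $Y$ has support $(b,as+b)$.

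First I would substitute these two facts into (\ref{eq2.1}) written for $Y$, obtaining
\begin{eqnarray*}
CPE_{\gamma}^{\psi}(Y)=\frac{1}{\Gamma(\gamma+1)}\int_{b}^{as+b}\psi(y)K_Y(y)[-\ln K_Y(y)]^{\gamma}dy.
\end{eqnarray*}
Next I would apply the change of variable $y=ax+b$, so that $dy=a\,dx$, the limits $y=b$ and $y=as+b$ become $x=0$ and $x=s$, and $K_Y(ax+b)=K(x)$. This last identity also turns $[-\ln K_Y(y)]^{\gamma}$ into $[-\ln K(x)]^{\gamma}$, while $\psi(y)$ becomes $\psi(ax+b)$. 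Pulling the Jacobian factor $a$ outside the integral then yields (\ref{eq2.7}) immediately.

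There is no serious obstacle here; the argument is essentially a one-line change of variable once the CDF of $Y$ is identified. The only point requiring minor care is the bookkeeping of the integration limits and the verification that the transformation acts differently on the three factors: since the weight $\psi$ is evaluated at the original argument it does not simplify and remains $\psi(ax+b)$, whereas both $K_Y(y)$ and $-\ln K_Y(y)$ reduce cleanly to $K(x)$ and $-\ln K(x)$. The hypothesis $a>0$ is what guarantees monotonicity of the transformation, so that the CDF computation is valid and the limits are not reversed, while $b\ge 0$ keeps $Y$ nonnegative, consistent with the standing assumptions of the paper.
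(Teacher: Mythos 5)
Your proposal is correct and matches the paper's own (omitted) argument: the paper's proof states only that the result "follows from $K_{Y}(x)=K\left(\frac{x-b}{a}\right)$," which is precisely the CDF identification and linear change of variable you carry out in detail. Your explicit bookkeeping of the support $(b,as+b)$ and the Jacobian factor $a$ simply fills in the routine steps the paper chose to omit.
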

\begin{proof}
	The proof follows from $K_{Y}(x)=K(\frac{x-b}{a})$. Thus, it is omitted.
\end{proof}

\begin{table}[h!]
	\caption{\label{tab1} The closed form expressions of the WFGCPE of different distributions. For the case of Fr\`{e}chet distribution, we assume that $\gamma> 3/c.$ }
	\begin{center}
		\begin{tabular}{c c c c c c c}
			\hline\hline
			Model & Cumulative distribution function & $\psi(x)=x$ & $\psi(x)=x^2$ \\ [1ex]
			\hline\hline
			Power distribution & $K(x)=\left(\displaystyle\frac{x}{b}\right)^{c},~0<x<b,~c>0$,  & $\displaystyle\frac{b^2}{c(1+\frac{2}{c})^{\gamma+1}}$ &$\displaystyle\frac{b^3}{c(1+\frac{3}{c})^{\gamma+1}}$\\[3ex]
			\hline
			Fr\`{e}chet distribution & $K(x)=e^{-\displaystyle b x^{-c}},~x>0,~b,~c>0$  & $\displaystyle\frac{b^{\frac{2}{c}}\Gamma(\gamma-\frac{2}{c})}{c\Gamma(\gamma+1)}$ &$\displaystyle\frac{b^{\frac{3}{c}}\Gamma(\gamma-\frac{3}{c})}{c\Gamma(\gamma+1)}$\\[3ex]
			
			
			\hline\hline
		\end{tabular}
	\end{center}
\end{table}

In particular, let us consider $\psi(x)=x.$ Then, after some simplification, form (\ref{eq2.7}) we get
\begin{eqnarray}
CPE_{\gamma}^{\psi}(Y)
=a^2 CPE_{\gamma}^{\psi(x)=x}(X)+ab~ CPE_{\gamma}(X),
\end{eqnarray}
where
\begin{eqnarray}
CPE_{\gamma}^{\psi(x)=x}(X)=\frac{1}{\Gamma(\gamma+1)}\int_{0}^{s}x K(x) [-\ln K(x)]^{\gamma}dx
\end{eqnarray}
and
$
CPE_{\gamma}(X)$ is given by (\ref{eq1.5}). It is always of interest to express various information measures in terms of the expectation of a function of random variable of interest. Define
\begin{eqnarray}
\mu(t)=\int_{0}^{t}\frac{K(x)}{K(t)}dx,
\end{eqnarray}
which is known as the mean inactivity time of $X$. \cite{di2009cumulative} expressed cumulative entropy in terms of the expectation of the mean inactivity time of $X$. Recently, \cite{di2021fractional} showed that the fractional generalized cumulative entropy can be written as the expectation of a decreasing function of $X.$ Below, we get similar findings for the case of the WFGCPE.
\begin{proposition}\label{prop2.2}
Let $X$ be nonnegative absolutely continuous random variable with distribution function $K(.)$ and density function $k(.)$ such that $CPE_{\gamma}^{\psi}(X)<+\infty.$ Then,
\begin{eqnarray}\label{eq2.12}
CPE_{\gamma}^{\psi}(X)=E[\tau_{\gamma}^{\psi}(X)],
\end{eqnarray}
where
\begin{eqnarray}\label{eq2.13}
\tau_{\gamma}^{\psi}(u)=\frac{1}{\Gamma(\gamma+1)}\int_{u}^{s}\psi(x)[-\ln K(x)]^{\gamma}dx,~~\gamma>0.
\end{eqnarray}
\end{proposition}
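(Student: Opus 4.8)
The plan is to start from the right-hand side, write the expectation $E[\tau_{\gamma}^{\psi}(X)]$ as an integral of $\tau_{\gamma}^{\psi}(u)$ against the density $k$, and then reduce it to the defining integral of $CPE_{\gamma}^{\psi}(X)$ in (\ref{eq2.1}) by interchanging the order of integration. Concretely, I would first substitute the expression (\ref{eq2.13}) for $\tau_{\gamma}^{\psi}$ into $E[\tau_{\gamma}^{\psi}(X)]=\int_{0}^{s}\tau_{\gamma}^{\psi}(u)k(u)\,du$, obtaining the iterated integral
\begin{eqnarray}
E[\tau_{\gamma}^{\psi}(X)]=\frac{1}{\Gamma(\gamma+1)}\int_{0}^{s}\left(\int_{u}^{s}\psi(x)[-\ln K(x)]^{\gamma}dx\right)k(u)\,du.
\end{eqnarray}

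The key step is to recognize that the region of integration is the triangle $\{(u,x):0<u<x<s\}$, which can be described equally well as $0<u<s,\,u<x<s$ or as $0<x<s,\,0<u<x$. Swapping the order of the two integrals therefore gives
\begin{eqnarray}
E[\tau_{\gamma}^{\psi}(X)]=\frac{1}{\Gamma(\gamma+1)}\int_{0}^{s}\psi(x)[-\ln K(x)]^{\gamma}\left(\int_{0}^{x}k(u)\,du\right)dx.
\end{eqnarray}
Since $X$ is nonnegative with support $(0,s)$, the inner integral evaluates to $\int_{0}^{x}k(u)\,du=K(x)-K(0)=K(x)$, and substituting this back reproduces exactly the definition (\ref{eq2.1}) of $CPE_{\gamma}^{\psi}(X)$, which finishes the argument.

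The main point requiring care is the justification of the interchange of the order of integration. I would invoke Tonelli's theorem, which applies because the integrand $\psi(x)[-\ln K(x)]^{\gamma}k(u)$ is nonnegative on the triangular region (recall $\psi\ge0$ and $[-\ln K(x)]^{\gamma}\ge0$ for $0<K(x)\le1$); the hypothesis $CPE_{\gamma}^{\psi}(X)<+\infty$ then guarantees that the common value of the two iterated integrals is finite and the manipulation is legitimate. Everything else is a routine evaluation of the inner integral, so I do not expect any serious obstacle beyond stating the Tonelli justification cleanly.
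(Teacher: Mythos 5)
Your proposal is correct and follows essentially the same route as the paper: the paper starts from the definition of $CPE_{\gamma}^{\psi}(X)$, writes $K(x)=\int_{0}^{x}k(u)\,du$, and applies Fubini's theorem over the triangle $\{0<u<x<s\}$ to obtain $E[\tau_{\gamma}^{\psi}(X)]$, which is precisely your argument run in the reverse direction. Your explicit appeal to Tonelli's theorem (using nonnegativity of the integrand) is a slightly more careful justification of the interchange than the paper's bare citation of Fubini, but it is the same proof.
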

\begin{proof}
	 Noting $K(x)=\int_{0}^{x}k(u)du$ and applying Fubini's theorem, we have from (\ref{eq2.1}) that
	 \begin{eqnarray*}
	 	CPE_{\gamma}^{\psi}(X)&=&\frac{1}{\Gamma(\gamma+1)}\int_{0}^{s}\psi(x)[-\ln K(x)]^{\gamma}\left(\int_{0}^{x}k(u)du\right) dx\\
	 	&=& \frac{1}{\Gamma(\gamma+1)}\int_{0}^{s}f(u) \left(\int_{u}^{s}\psi(x)[-\ln K(x)]^{\gamma}dx\right)du\\
	 	&=&E[\tau_{\gamma}^{\psi}(X)],
	 \end{eqnarray*}
 where $\tau_{\gamma}^{\psi}(.)$ is given by (\ref{eq2.13}).  This complets the proof.
\end{proof}

Note that (\ref{eq2.12}) reduces to Eq. (20) of  \cite{tahmasebi2020extension}, for $\gamma=n\in\mathbb{N}$ and $s=+\infty.$ For $\psi(x)=1,$ Proposition \ref{prop2.2} turns out as Proposition $2.1$ of \cite{di2021fractional}.

Similar to the normalized cumulative entropy, \cite{di2021fractional} propoosed a normalized fractional generalized cumulative entropy of a random variable $X$ with nonnegative bounded support.  Here, we define a normalized WFGCPE. It is assumed that the weighted cumulative past entropy with a general nonnegative weight function is nonzero and finite, which is given by (see \cite{suhov2015weighted})
\begin{eqnarray}\label{eq2.14}
CPE^{\psi}(X)=-\int_{0}^{s}\psi(x)K(x)\ln K(x)dx,~~\psi(x)\ge0.
\end{eqnarray}
The normalized WFGCPE of $X$ can be defined as
\begin{eqnarray}
NCPE_{\gamma}^{\psi}(X)=\frac{CPE_{\gamma}^{\psi}(X)}{(CPE^{\psi})^{\gamma}}=
\frac{1}{\Gamma(\gamma+1)}\frac{\int_{0}^{s}\psi(x)K(x)[-\ln K(x)]^{\gamma}dx}{\left(\int_{0}^{s}\psi(x)K(x)[-\ln K(x)]dx\right)^{\gamma}},~~\gamma>0.
\end{eqnarray}
Note that
$$\lim_{\gamma\rightarrow 0^{+}}NCPE_{\gamma}^{\psi}(X)=\int_{0}^{s}\psi(x)K(x)dx~~\mbox{and}~~
\lim_{\gamma\rightarrow 1}NCPE_{\gamma}^{\psi}(X)=1.$$

The closed form expressions of the normalized WFGCPE of power and Fr\`{e}chet distributions are presented in Table \ref{tab2} for two choices of the weight functions.
\begin{table}[h!]
	\caption{\label{tab2} The closed form expressions of the normalized WFGCPE of different distributions as Table $1$. For the case of the Fr\`{e}chet distribution, $c> \max\{3/\gamma,3\},~\gamma>0.$ }
	\begin{center}
		\begin{tabular}{c c c c c c c}
			\hline\hline
			Model & $\psi(x)=x$ & $\psi(x)=x^2$ \\ [1ex]
			\hline \hline
			Power distribution &  $\displaystyle\frac{(c+2)^{\gamma-1}}{\Gamma(\gamma+1)b^{2(\gamma-1)}}$  & $\displaystyle\frac{(c+3)^{\gamma-1}}{\Gamma(\gamma+1)b^{3(\gamma-1)}}$\\[3ex]
			\hline
			Fr\`{e}chet distribution &  $\displaystyle\frac{1}{(\Gamma(\gamma+1))^{2}}\frac{c^{\gamma-1}}{b^{\frac{2}{c}(\gamma-1)}}\frac{\Gamma(\gamma-\frac{2}{c})}{(\Gamma(1-\frac{2}{c}))^{\gamma}}$  &$\displaystyle\frac{1}{(\Gamma(\gamma+1))^{2}}\frac{c^{\gamma-1}}{b^{\frac{3}{c}(\gamma-1)}}\frac{\Gamma(\gamma-\frac{3}{c})}{(\Gamma(1-\frac{3}{c}))^{\gamma}}$ \\[3ex]
			
			
			\hline\hline
		\end{tabular}
	\end{center}
\end{table}

\begin{figure}[h]
\begin{center}
\subfigure[]{\label{c1}\includegraphics[height=1.9in]{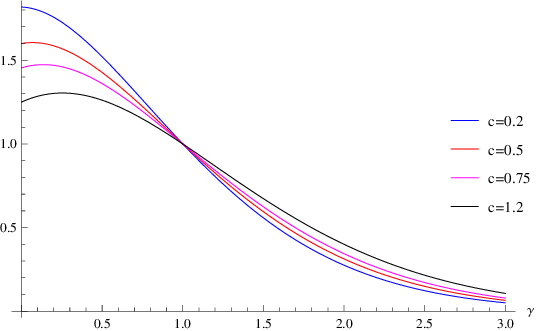}}
\subfigure[]{\label{c1}\includegraphics[height=1.9in]{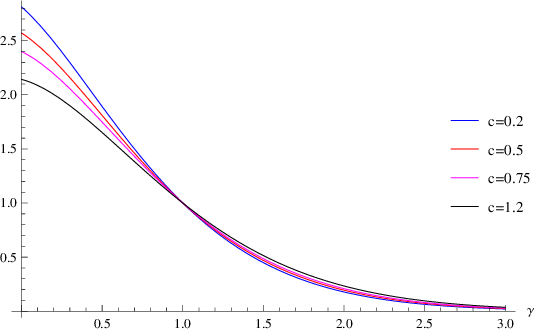}}
\caption{Graphs of the normalized WFGCPE with $(a)$ $\psi(x)=x$ and $(b)$ $\psi(x)=x^2$ for the power distribution as in Table \ref{tab2}.}
\end{center}
\end{figure}

\subsection{Some ordering results}
In this subsection, we obtain some ordering properties for the WFGCPE. It can be shown that the function $\tau_{\gamma}^{\psi}(u)$ given by (\ref{eq2.13}) is decreasing and convex when $\psi(x)$ is decreasing in $x$. Thus, for decreasing $\psi,$ we have
\begin{eqnarray}
X_{1}\le_{dcx}X_{2}\Rightarrow E[\tau_{\gamma}^{\psi}(X_1)]\le E[\tau_{\gamma}^{\psi}(X_2)] \Rightarrow CPE_{\gamma}^{\psi}(X_1)\le CPE_{\gamma}^{\psi}(X_2).
\end{eqnarray}
\cite{di2017further} showed that more dispersed distributions produce larger generalized cumulative entropy. Note that the generalized cumulative entropy was introduced and studied by \cite{kayal2016generalized}. Recently, \cite{di2021fractional} established similar property for the fractional generalized cumulative entropy. In the following proposition, we notice that analogous result holds  for the proposed measure given by (\ref{eq2.1}).  We recall that the dispersive order $X_{1}\le_{disp}X_{2}$ can be equivalently characterized by  (see P. $160$, \cite{oja1981location})
\begin{eqnarray}\label{eq2.15*}
k_{1}(K_{1}^{-1}(u))\ge k_{2}(K_{2}^{-1}(u)),~~u\in(0,1).
\end{eqnarray}
\begin{proposition}\label{prop2.3*}
Consider two nonnegative absolutely continuous random variables $X_{1}$ and $X_{2}$ with CDFs $K_{1}$ and $K_{2}$, respectively. Then,
\begin{eqnarray}\label{eq2.16*}
X_{1}\le_{disp}X_{2}\Rightarrow CPE_{\gamma}^{\psi}(X_{1})\le CPE_{\gamma}^{\psi}(X_{2}),
\end{eqnarray}
provided $\psi$ is increasing.
\end{proposition}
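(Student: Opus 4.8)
The plan is to reduce both sides of (\ref{eq2.16*}) to integrals over the common range $(0,1)$ of the two CDFs by means of the quantile substitution $u=K_{i}(x)$, i.e. $x=K_{i}^{-1}(u)$ and $dx=du/k_{i}(K_{i}^{-1}(u))$. Under this change of variable, the defining formula (\ref{eq2.1}) becomes
\[
CPE_{\gamma}^{\psi}(X_{i})=\frac{1}{\Gamma(\gamma+1)}\int_{0}^{1}\frac{\psi(K_{i}^{-1}(u))}{k_{i}(K_{i}^{-1}(u))}\,u\,[-\ln u]^{\gamma}\,du,\qquad i=1,2.
\]
Since $u\in(0,1)$ gives $-\ln u>0$, the factor $u[-\ln u]^{\gamma}$ is nonnegative and, crucially, is the same for both $i$. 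Hence it is enough to compare the remaining integrands $\psi(K_{i}^{-1}(u))/k_{i}(K_{i}^{-1}(u))$ pointwise in $u$.

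I would then build this pointwise comparison from two ingredients, both furnished by $X_{1}\le_{disp}X_{2}$. First, the characterization (\ref{eq2.15*}) gives $k_{1}(K_{1}^{-1}(u))\ge k_{2}(K_{2}^{-1}(u))$, so that $1/k_{1}(K_{1}^{-1}(u))\le 1/k_{2}(K_{2}^{-1}(u))$ for every $u\in(0,1)$. Second, because both variables are nonnegative with left support endpoint $0$, the dispersive order upgrades to the usual stochastic order $X_{1}\le_{st}X_{2}$, that is $K_{1}^{-1}(u)\le K_{2}^{-1}(u)$; monotonicity of $\psi$ then yields $\psi(K_{1}^{-1}(u))\le\psi(K_{2}^{-1}(u))$. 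As $\psi\ge0$ and the reciprocal densities are positive, all four quantities are nonnegative, so multiplying the two inequalities gives $\psi(K_{1}^{-1}(u))/k_{1}(K_{1}^{-1}(u))\le\psi(K_{2}^{-1}(u))/k_{2}(K_{2}^{-1}(u))$. Multiplying by $u[-\ln u]^{\gamma}\ge0$ and integrating over $(0,1)$ then yields (\ref{eq2.16*}).

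The step I expect to be the crux is the passage from the dispersive to the stochastic order, because this is precisely where the weight makes itself felt: in the unweighted case $\psi\equiv1$ only the reciprocal-density inequality is required, whereas an increasing $\psi$ forces a genuine comparison of the quantile functions $K_{1}^{-1}$ and $K_{2}^{-1}$ themselves. To justify it I would observe that $X_{1}\le_{disp}X_{2}$ makes $K_{2}^{-1}(u)-K_{1}^{-1}(u)$ nondecreasing in $u$; letting $u\to0^{+}$ and using that both supports begin at $0$ (so $K_{1}^{-1}(0^{+})=K_{2}^{-1}(0^{+})=0$) shows this difference stays nonnegative, which is exactly $X_{1}\le_{st}X_{2}$. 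What remains is routine positivity bookkeeping together with the finiteness of the integrals, guaranteed by the standing assumption that $CPE_{\gamma}^{\psi}(X_{i})<+\infty$.
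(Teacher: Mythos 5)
Your proof is correct and takes essentially the same route as the paper's: the quantile substitution $u=K_{i}(x)$ reducing both quantities to integrals of $u(-\ln u)^{\gamma}$ times $\psi(K_{i}^{-1}(u))/k_{i}(K_{i}^{-1}(u))$ over $(0,1)$, then combining the density inequality (\ref{eq2.15*}) with the implication $X_{1}\le_{disp}X_{2}\Rightarrow X_{1}\le_{st}X_{2}$ and the monotonicity of $\psi$ to compare the integrands pointwise. The only difference is that you actually justify the passage from the dispersive to the usual stochastic order (via the nondecreasing quantile difference and the common left endpoint $0$), a step the paper simply quotes as known.
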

\begin{proof}
Using the transformation $K_{1}(x)=u$, we have
\begin{eqnarray*}
CPE_{\gamma}^{\psi}(X_{1})=\frac{1}{\Gamma(\gamma+1)}\int_{0}^{1}\psi(K_{1}^{-1}(u))u(-\ln u)^{\gamma}\frac{du}{k_{1}(K_{1}^{-1}(u))}.
\end{eqnarray*}
Thus,
\begin{eqnarray}\label{eq2.17}
CPE_{\gamma}^{\psi}(X_{1})-CPE_{\gamma}^{\psi}(X_{2})=\frac{1}{\Gamma(\gamma+1)}\int_{0}^{1}u(-\ln u)^{\gamma}\left[\frac{\psi(K_{1}^{-1}(u))}{k_{1}(K_{1}^{-1}(u))}-\frac{\psi(K_{2}^{-1}(u))}{k_{2}(K_{2}^{-1}(u))}\right]du.
\end{eqnarray}
We know that  $X_{1}\le_{disp}X_{2}\Rightarrow X_{1}\le_{st}X_{2}$, and as a result,  $K_{1}^{-1}(u)\le K_{2}^{-1}(u)$, $u\in(0,1)$. Moreover, $\psi$ is increasing. Thus, $\psi(K_{1}^{-1}(u))\le \psi(K_{2}^{-1}(u)).$ Using this inequality and (\ref{eq2.15*}) in (\ref{eq2.17}), the hypothesis in (\ref{eq2.16*}) holds. This completes the proof.
\end{proof}

Proposition \ref{prop2.3*} reduces to Proposition $1$ of \cite{tahmasebi2020weighted} when $s=+\infty$ and $\gamma=n\in\mathbb{N}.$ In the following, we obtain different sufficient conditions involving the hazard rate order for the similar outcome in (\ref{eq2.16*}). We recall that $X_{1}$ has decreasing failure rate (DFR) if the hazard rate of $X_{1}$ is decreasing, equivalently, $\bar{K}_{1}(x)=1-K_{1}(x)$ is log-convex.
\begin{proposition}
For the random variables $X_{1}$ and $X_{2}$ as in Proposition \ref{prop2.3*}, let $X_{1}\le_{hr}X_{2}$ hold. Further, let $X_{1}$ or $X_{2}$ be DFR. Then, for $\gamma>0,$ one has $CPE_{\gamma}^{\psi}(X_{1})\le CPE_{\gamma}^{\psi}(X_{2})$.
\end{proposition}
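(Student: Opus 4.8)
The plan is to reduce this statement to the dispersive-order result already established in Proposition \ref{prop2.3*}. The key observation is the classical implication from reliability theory: if $X_{1}\le_{hr}X_{2}$ and at least one of $X_{1},X_{2}$ is DFR, then $X_{1}\le_{disp}X_{2}$. Once this implication is in hand, the conclusion $CPE_{\gamma}^{\psi}(X_{1})\le CPE_{\gamma}^{\psi}(X_{2})$ follows immediately from Proposition \ref{prop2.3*}, which requires $\psi$ increasing. I would therefore carry the same hypothesis (increasing $\psi$) here and make it explicit in the statement, since Proposition \ref{prop2.3*} is being invoked.

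To establish $X_{1}\le_{hr}X_{2}$ together with the DFR property $\Rightarrow X_{1}\le_{disp}X_{2}$, I would argue through hazard rates. Write $r_{i}=k_{i}/\bar{K}_{i}$. The hazard rate order $X_{1}\le_{hr}X_{2}$ is equivalent to $r_{1}(x)\ge r_{2}(x)$ for all $x$, and since $X_{1}\le_{hr}X_{2}\Rightarrow X_{1}\le_{st}X_{2}$ one also has $K_{1}(x)\ge K_{2}(x)$. Suppose first $X_{2}$ is DFR, i.e. $r_{2}$ is decreasing. Consider $\varphi(x)=K_{2}^{-1}(K_{1}(x))$; the dispersive order is equivalent to $\varphi(x)-x$ being nondecreasing, that is $\varphi'(x)\ge 1$. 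At the matched argument $\bar{K}_{2}(\varphi(x))=\bar{K}_{1}(x)$, so differentiating gives $\varphi'(x)=k_{1}(x)/k_{2}(\varphi(x))=r_{1}(x)/r_{2}(\varphi(x))$. Because $\varphi(x)\ge x$ (from $X_{1}\le_{st}X_{2}$) and $r_{2}$ is decreasing, $r_{2}(\varphi(x))\le r_{2}(x)\le r_{1}(x)$, whence $\varphi'(x)\ge 1$. If instead $X_{1}$ is DFR, the symmetric map $\eta(y)=K_{1}^{-1}(K_{2}(y))$ satisfies $\eta(y)\le y$ and $\eta'(y)=r_{2}(y)/r_{1}(\eta(y))$; since $r_{1}$ is decreasing, $r_{1}(\eta(y))\ge r_{1}(y)\ge r_{2}(y)$, so $\eta'(y)\le 1$, which again yields the dispersive order.

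The only genuinely substantive step is this dispersive-order implication; everything afterward is a one-line appeal to Proposition \ref{prop2.3*}. In the write-up I would either cite the standard reference for the implication (for instance Shaked and Shanthikumar) or include the short hazard-rate computation sketched above. The point requiring a little care is the correct handling of the two cases ($X_{1}$ DFR versus $X_{2}$ DFR) via the two mutually inverse maps $\varphi$ and $\eta$, together with the monotone matching $\varphi(x)\ge x$, $\eta(y)\le y$ supplied by the usual stochastic order.
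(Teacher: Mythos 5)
Your proposal is correct and takes essentially the same route as the paper: the paper's proof is exactly the reduction you describe, combining Proposition \ref{prop2.3*} with the classical implication that $X_{1}\le_{hr}X_{2}$ together with $X_{1}$ or $X_{2}$ being DFR yields $X_{1}\le_{disp}X_{2}$, which the paper simply cites as Theorem $2.1(b)$ of \cite{bagai1986tail} instead of proving. Your hazard-rate verification of that implication is a valid proof of the cited result, and your observation that the increasing-$\psi$ hypothesis of Proposition \ref{prop2.3*} must be carried over is a correct (and worthwhile) clarification of the statement.
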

\begin{proof}
Making use of the result in Proposition \ref{prop2.3*}, the proof follows from Theorem $2.1(b)$ of  \cite{bagai1986tail}.
\end{proof}

Next, we will study whether the usual stochastic order implies the ordering of the WFGCPE. In doing so, we consider two random variables $X_{1}$ and $X_{2}$ with respective distribution functions $K_{1}(x)=x^{c_1}$ and $K_{2}(x)=x^{c_2}$, $0<x<1$, $c_{1},~c_{2}>0.$ For $c_1\le c_2,$ clearly $K_{1}(x)\ge K_{2}(x)$ implies
$X_{1}\le_{st}X_{2}.$ Now, we plot graphs of the difference of the WFGCPEs of $X_{1}$ and $X_{2}$ in Figure $2$, for some values of $\gamma,$ which reveal that in general, the ordering between the WFGCPEs may not hold.

\begin{figure}[h]
\begin{center}
\subfigure[]{\label{c1}\includegraphics[height=1.5in]{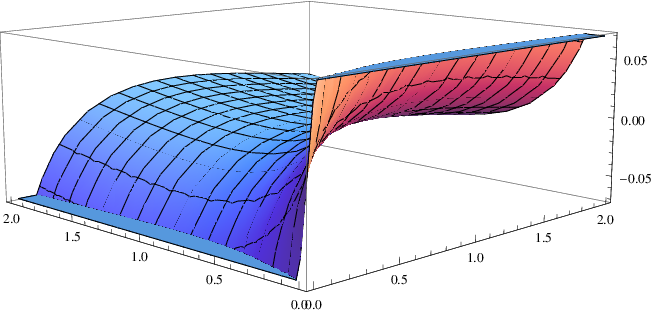}}
\subfigure[]{\label{c1}\includegraphics[height=1.5in]{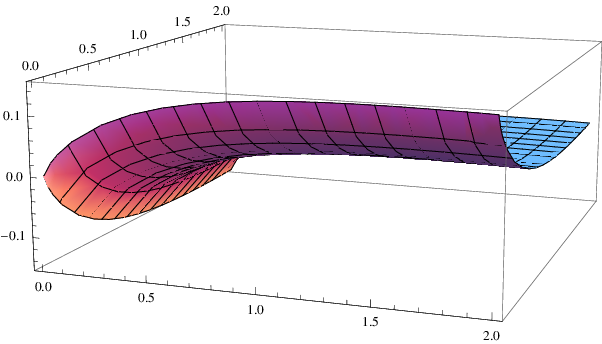}}
\subfigure[]{\label{c1}\includegraphics[height=1.3in]{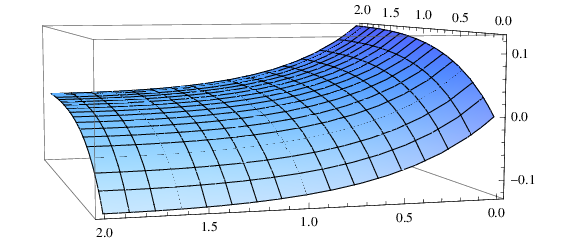}}
\subfigure[]{\label{c1}\includegraphics[height=1.3in]{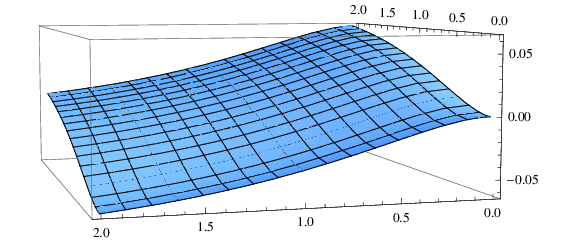}}
\end{center}
\caption{The plots of $CPE_{\gamma}^{\psi}(X_{1})-CPE_{\gamma}^{\psi}(X_{2})$ for $(a)$ $\gamma=0.5$, $(b)$ $\gamma=0.75$, $(c)$ $\gamma=1.5$ and $(d)$ $\gamma=2.5$. Here, $\psi(x)=x$ is considered.}
\end{figure}

We end this subsection with a result which compares the WFGCPE measures when two random variables are ordered in the sense of the usual stochastic order. Here, prime denotes the ordinary derivative.
\begin{proposition}\label{prop2.5}
	Consider two nonnegative absolutely continuous random variables $X_{1}$ and $X_{2}$ with CDFs $K_{1}$ and $K_{2}$, respectively, such that $X_{1}\le_{st}X_{2}$. Further, assume that the means of $X_{1}$ and $X_{2}$ are finite but unequal. Then, for $\tau_{\gamma}^{\psi}(x)<+\infty$ and $E[\tau_{\gamma}^{\psi}(X)]<\infty$,  we have
	\begin{eqnarray}
	 CPE_{\gamma}^{\psi}(X_{1})=E[\tau_{\gamma}^{\psi}(X_{2})]+E[{\tau_{\gamma}^{\psi}}^{\prime}(V)][E(X_{1})-E(X_{2})],
	\end{eqnarray}
	where $V$ is nonnegative absolutely continuous random variable with density function given by
	\begin{eqnarray}
	k_{V}(x)=\frac{\bar{K}_{2}(x)-\bar{K}_{1}(x)}{E(X_{2})-E(X_{1})},~~x>0
	\end{eqnarray}
\end{proposition}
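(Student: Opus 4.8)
The plan is to reduce everything to the representation in Proposition \ref{prop2.2} together with a single integration-by-parts identity, and then to recognize the resulting integral as an expectation against the density $k_{V}$. By Proposition \ref{prop2.2} we have $CPE_{\gamma}^{\psi}(X_{1})=E[\tau_{\gamma}^{\psi}(X_{1})]$, with $\tau_{\gamma}^{\psi}$ the function in (\ref{eq2.13}). Since the target identity also features $E[\tau_{\gamma}^{\psi}(X_{2})]$, I would treat $\tau_{\gamma}^{\psi}$ as one fixed function throughout and expand the difference $E[\tau_{\gamma}^{\psi}(X_{1})]-E[\tau_{\gamma}^{\psi}(X_{2})]$ in terms of the gap between the two survival functions.

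First I would record the elementary fact that, for a nonnegative random variable $X$ with survival function $\bar{K}$ and a differentiable $g$ with $g(x)\bar{K}(x)\to 0$ at the upper endpoint, integration by parts gives
\begin{eqnarray*}
E[g(X)]=g(0)+\int_{0}^{s}g'(x)\bar{K}(x)\,dx.
\end{eqnarray*}
Applying this with $g=\tau_{\gamma}^{\psi}$ to both $X_{1}$ and $X_{2}$ and subtracting, the constant terms $g(0)$ cancel and I obtain
\begin{eqnarray*}
E[\tau_{\gamma}^{\psi}(X_{1})]-E[\tau_{\gamma}^{\psi}(X_{2})]=\int_{0}^{s}{\tau_{\gamma}^{\psi}}'(x)\,[\bar{K}_{1}(x)-\bar{K}_{2}(x)]\,dx,
\end{eqnarray*}
where ${\tau_{\gamma}^{\psi}}'(x)=-\psi(x)[-\ln K(x)]^{\gamma}/\Gamma(\gamma+1)$ exists by the fundamental theorem of calculus.

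Next I would insert the density $k_{V}$. Because $X_{1}\le_{st}X_{2}$ yields $\bar{K}_{1}(x)\le\bar{K}_{2}(x)$, the numerator $\bar{K}_{2}-\bar{K}_{1}$ is nonnegative, and since $E(X_{i})=\int_{0}^{s}\bar{K}_{i}(x)\,dx$ the two unequal means force $E(X_{2})-E(X_{1})>0$; hence $k_{V}$ is a genuine density. Writing $\bar{K}_{1}(x)-\bar{K}_{2}(x)=k_{V}(x)[E(X_{1})-E(X_{2})]$ and pulling the constant out of the integral turns the right-hand side into $[E(X_{1})-E(X_{2})]\int_{0}^{s}{\tau_{\gamma}^{\psi}}'(x)k_{V}(x)\,dx=[E(X_{1})-E(X_{2})]\,E[{\tau_{\gamma}^{\psi}}'(V)]$. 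Combining this with $CPE_{\gamma}^{\psi}(X_{1})=E[\tau_{\gamma}^{\psi}(X_{1})]$ then delivers the claimed identity.

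The calculations are otherwise routine; the only point requiring care is the justification of the integration by parts, namely the vanishing of the boundary term $g(x)\bar{K}(x)$ at the upper endpoint. This is immediate when $s<+\infty$ (since $\bar{K}(s)=0$), and otherwise is controlled by the standing assumptions $\tau_{\gamma}^{\psi}(x)<+\infty$ and $E[\tau_{\gamma}^{\psi}(X)]<\infty$, which force $\tau_{\gamma}^{\psi}(x)\to 0$ and keep the product integrable. I expect this endpoint and integrability bookkeeping, rather than any structural difficulty, to be the main obstacle.
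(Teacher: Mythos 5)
Your proof is correct, and its skeleton matches the paper's: both start from Proposition \ref{prop2.2} to write $CPE_{\gamma}^{\psi}(X_{1})=E[\tau_{\gamma}^{\psi}(X_{1})]$ and both treat $\tau_{\gamma}^{\psi}$ (built from $K_{1}$) as a single fixed function. The difference is in the second step: the paper simply cites Theorem 4.1 of Di Crescenzo (1999) --- the probabilistic analogue of the mean value theorem, which states that for $X_{1}\le_{st}X_{2}$ with finite unequal means and differentiable $g$ one has $E[g(X_{2})]-E[g(X_{1})]=E[g'(V)]\,[E(X_{2})-E(X_{1})]$ with $V$ having density $k_{V}$ --- whereas you re-derive exactly this identity from scratch via the integration-by-parts formula $E[g(X)]=g(0)+\int_{0}^{s}g'(x)\bar{K}(x)\,dx$, cancellation of the $g(0)$ terms, and the observation that $\bar{K}_{2}-\bar{K}_{1}=k_{V}\,[E(X_{2})-E(X_{1})]$ with $k_{V}$ a genuine density because the stochastic order makes the numerator nonnegative and forces $E(X_{2})>E(X_{1})$. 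In effect you have reproduced Di Crescenzo's own proof inline. What the paper's route buys is brevity; what yours buys is self-containedness and transparency --- it makes visible precisely where each hypothesis enters (the stochastic order and unequal means to make $k_{V}$ a density, the finiteness assumptions on $\tau_{\gamma}^{\psi}$ to kill the boundary term $\tau_{\gamma}^{\psi}(x)\bar{K}(x)$ at the upper endpoint, using $0\le \tau_{\gamma}^{\psi}(x)\bar{K}(x)\le\tau_{\gamma}^{\psi}(x)\to 0$), points the citation leaves implicit. One small point of care, which does not affect validity: if the support of $X_{2}$ extends beyond the upper endpoint $s$ of the support of $X_{1}$, then $\tau_{\gamma}^{\psi}$ and ${\tau_{\gamma}^{\psi}}'$ vanish there (since $K_{1}\equiv 1$), so your integrals over $(0,s)$ do cover the general case.
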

\begin{proof}
	We note that $CPE_{\gamma}^{\psi}(X_1)=E[\tau_{\gamma}^{\psi}(X_1)]$ (see Proposition \ref{prop2.2}). Now, the rest of the proof follows from Theorem $4.1$ of \cite{di1999probabilistic}.
\end{proof}

It can be easily seen that when $\psi(x)=1$, Proposition \ref{prop2.5} reduces to Proposition $3.4$ of \cite{di2021fractional}. Further, when $\gamma=n\in\mathbb{N}$ and $\psi(x)=1$, then Proposition \ref{prop2.5} coincides with Proposition $3.8$ of \cite{di2017further}. Here, ${\tau_{\gamma}^{\psi}}^{\prime}(v)\le 0,$ for $v>0.$ Thus, under the assumptions made in Proposition \ref{prop2.5}, a lower bound of $CPE_{\gamma}^{\psi}(X_{1})$ can be obtained, which is given by
$$CPE_{\gamma}^{\psi}(X_{1})\ge E[\tau_{\gamma}^{\psi}(X_{2})].$$
In the  next subsection, we discuss various bounds of the WFGCPE given by (\ref{eq2.1}).

\subsection{Bounds}
\cite{di2009cumulative} established that the cumulative entropy of the sum of two independent nonnegative random variables is larger than the maximum of the cumulative entropies of the individual  random variables. Similar result was obtained by \cite{di2021fractional} for the fractonal generalized cumulative entropy. Below, we establish analogous result for the WFGCPE under the assumption that the weight function $\psi(x)$ is increasing in $x$ and the PDFs of the random variables are log-concave.
\begin{proposition}\label{prop2.4*}
Let $X_{1}$ and $X_{2}$ be a pair of independent nonnegative absolutely continuous random variables having log-concave PDFs. Then, for all increasing function $\psi$, we have
\begin{eqnarray}
CPE_{\gamma}^{\psi}(X_{1}+X_{2})\ge \max\{CPE_{\gamma}^{\psi}(X_{1}),CPE_{\gamma}^{\psi}(X_{2})\},~~\gamma>0.
\end{eqnarray}
\end{proposition}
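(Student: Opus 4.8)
The plan is to reduce the claimed bound to the dispersive-order comparison already available in Proposition \ref{prop2.3*}. The guiding idea is that convolving a random variable with an independent summand having a log-concave density makes it larger in the dispersive order; coupling this with the fact that, for increasing $\psi$, a dispersive-order inequality forces the corresponding ordering of the WFGCPE will deliver both lower bounds at once. This mirrors the route used by \cite{di2009cumulative} for the cumulative entropy of a sum.

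First I would invoke the classical property of the dispersive order: if $W$ is a nonnegative random variable with log-concave PDF and $Z$ is independent of $W$, then $Z\le_{disp}Z+W$. Since $X_{1}$ and $X_{2}$ are independent with log-concave PDFs, applying this with $(Z,W)=(X_{1},X_{2})$ gives $X_{1}\le_{disp}X_{1}+X_{2}$, and applying it with $(Z,W)=(X_{2},X_{1})$ gives $X_{2}\le_{disp}X_{1}+X_{2}$. Note that $X_{1}+X_{2}$ is again nonnegative and absolutely continuous, so Proposition \ref{prop2.3*} is applicable to each pair. Because $\psi$ is increasing, Proposition \ref{prop2.3*} then yields
\[
CPE_{\gamma}^{\psi}(X_{1})\le CPE_{\gamma}^{\psi}(X_{1}+X_{2})\quad\text{and}\quad CPE_{\gamma}^{\psi}(X_{2})\le CPE_{\gamma}^{\psi}(X_{1}+X_{2}).
\]
Taking the larger of the two left-hand sides produces exactly $CPE_{\gamma}^{\psi}(X_{1}+X_{2})\ge\max\{CPE_{\gamma}^{\psi}(X_{1}),CPE_{\gamma}^{\psi}(X_{2})\}$, which is the assertion.

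The two applications of Proposition \ref{prop2.3*} and the final maximization are routine; the crux is the dispersive-order step $Z\le_{disp}Z+W$, and I expect this to be the main obstacle, since it is precisely where log-concavity of the density is essential. Concretely, one must show, via the density characterization (\ref{eq2.15*}), that $k_{Z}(K_{Z}^{-1}(u))\ge k_{Z+W}(K_{Z+W}^{-1}(u))$ for all $u\in(0,1)$, where $K_{Z}$ and $k_{Z}$ denote the CDF and PDF of $Z$. If a self-contained argument is desired, this inequality follows from the total positivity (variation-diminishing) property of the convolution kernel attached to the log-concave density of $W$; otherwise it may simply be quoted from the stochastic-orders literature, after which the remainder of the proof is immediate.
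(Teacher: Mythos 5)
Your proposal is correct and takes essentially the same route as the paper: the paper's (omitted) proof also combines the log-concavity/dispersive-order fact you quote (Theorem 3.B.7 of \cite{shaked2007stochastic}, giving $X_{1}\le_{disp}X_{1}+X_{2}$ and $X_{2}\le_{disp}X_{1}+X_{2}$) with Proposition \ref{prop2.3*} for increasing $\psi$, and then takes the maximum. The only difference is cosmetic: you sketch how the dispersive step could be made self-contained via total positivity, whereas the paper simply cites it.
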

\begin{proof}
	Under the assumptions made, the proof follows from  Theorem $3.B.7$ of \cite{shaked2007stochastic} and Proposition \ref{prop2.3*}. Thus, it is omitted.
\end{proof}

When $\gamma=n\in \mathbb{N}$, $s=+\infty$ and $\psi(x)=[K(x)]^{n}$, the result in Proposition \ref{prop2.4*} yields Proposition $2$ of \cite{tahmasebi2020extension}. Further, if we consider $\psi(x)=1$, then one can easily obtain the result stated in Proposition $3.1$ of \cite{di2021fractional}. Next, we obtain a bound of the WFGCPE given by (\ref{eq2.1}).
\begin{proposition}
For a nonnegative random variable with support $(0,s)$ and $\psi(x)=[\xi(x)]^{\gamma}$, $\xi(x)\ge0,$ we have
\begin{eqnarray}
CPE_{\gamma}^{\psi}(X)
\left\{
\begin{array}{ll}
\ge \displaystyle\frac{s^{1-\gamma}}{\Gamma(\gamma+1)}\left[CPE^{\xi}(X)\right]^{\gamma}, & \mbox{if}~ \gamma\ge1\\
\le \displaystyle\frac{s^{1-\gamma}}{\Gamma(\gamma+1)}\left[CPE^{\xi}(X)\right]^{\gamma}, & \mbox{if}~ 0<\gamma\le1,
\end{array}
\right.
\end{eqnarray}
where $CPE^{\xi}(X)=-\int_{0}^{s}\xi(x)K(x)\ln K(x)dx$ is known as the weighted cumulative past entropy with weight function $\xi(x).$
\end{proposition}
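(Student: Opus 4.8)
The plan is to recognize the defining integral as the integral of a $\gamma$-th power against the (unnormalized) measure $K(x)\,dx$ and then invoke Jensen's inequality. Since $\psi(x)=[\xi(x)]^{\gamma}$, the integrand factors as
\begin{eqnarray*}
[\xi(x)]^{\gamma}K(x)[-\ln K(x)]^{\gamma}=\big(\xi(x)[-\ln K(x)]\big)^{\gamma}K(x),
\end{eqnarray*}
so that, writing $h(x)=\xi(x)[-\ln K(x)]\ge 0$ (nonnegative because $0\le K(x)\le 1$), we have
\begin{eqnarray*}
\Gamma(\gamma+1)\,CPE_{\gamma}^{\psi}(X)=\int_{0}^{s}h(x)^{\gamma}K(x)\,dx,\qquad CPE^{\xi}(X)=\int_{0}^{s}h(x)K(x)\,dx.
\end{eqnarray*}

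First I would set $M=\int_{0}^{s}K(x)\,dx$ and introduce the probability measure $d\mu(x)=K(x)\,dx/M$ on $(0,s)$, which is legitimate because $0<M\le s$; the key bound $M\le s$ comes directly from $K\le 1$. With respect to $\mu$ the two displayed quantities read $\int_{0}^{s}h^{\gamma}\,d\mu=\Gamma(\gamma+1)\,CPE_{\gamma}^{\psi}(X)/M$ and $\int_{0}^{s}h\,d\mu=CPE^{\xi}(X)/M$. I would then apply Jensen's inequality to the map $t\mapsto t^{\gamma}$ on $[0,\infty)$: for $\gamma\ge 1$ this map is convex, yielding $\int h^{\gamma}\,d\mu\ge(\int h\,d\mu)^{\gamma}$, i.e. $\Gamma(\gamma+1)\,CPE_{\gamma}^{\psi}(X)\ge M^{1-\gamma}[CPE^{\xi}(X)]^{\gamma}$, while for $0<\gamma\le 1$ it is concave and the inequality reverses.

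The final step is to replace $M^{1-\gamma}$ by $s^{1-\gamma}$. Since $0<M\le s$, when $\gamma\ge 1$ the exponent $1-\gamma\le 0$ forces $M^{1-\gamma}\ge s^{1-\gamma}$, which keeps the $\ge$ direction and gives $CPE_{\gamma}^{\psi}(X)\ge \frac{s^{1-\gamma}}{\Gamma(\gamma+1)}[CPE^{\xi}(X)]^{\gamma}$; when $0<\gamma\le 1$ the exponent $1-\gamma\ge 0$ gives $M^{1-\gamma}\le s^{1-\gamma}$, which keeps the $\le$ direction and gives the opposite bound. Dividing by $\Gamma(\gamma+1)$ produces the two stated cases.

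The main obstacle is purely the bookkeeping of inequality directions, since the sign of $1-\gamma$ flips between the two regimes; one must verify that in each case the Jensen direction and the direction of the substitution $M^{1-\gamma}\!\leftrightarrow s^{1-\gamma}$ reinforce rather than cancel, which is exactly why the convex/concave split of $\gamma$ matches the $\ge/\le$ split of the statement. A minor point to dispose of is the degenerate situation $M=0$, where both sides vanish and the claim is trivial, so that $\mu$ is a genuine probability measure whenever the assertion is non-vacuous. An equivalent route that avoids normalizing is a direct use of H\"older's inequality with conjugate exponents $\gamma$ and $\gamma/(\gamma-1)$ for $\gamma>1$ (and its reverse form for $0<\gamma<1$), which yields the same intermediate estimate $[CPE^{\xi}(X)]^{\gamma}\lessgtr M^{\gamma-1}\,\Gamma(\gamma+1)\,CPE_{\gamma}^{\psi}(X)$ before the passage $M\le s$.
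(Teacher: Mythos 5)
Your proof is correct, but it takes a genuinely different route from the paper's. The paper first uses $K(x)\le 1$ to pull the distribution function inside the $\gamma$-th power: for $\gamma\ge 1$ one has $K(x)\ge[K(x)]^{\gamma}$, hence $\Gamma(\gamma+1)\,CPE_{\gamma}^{\psi}(X)\ge\int_{0}^{s}[\beta(x)]^{\gamma}\,dx$ with $\beta(x)=-\xi(x)K(x)\ln K(x)$, and then applies Jensen's inequality for $t\mapsto t^{\gamma}$ with respect to the \emph{uniform} probability measure $dx/s$ on $(0,s)$, which produces the constant $s^{1-\gamma}$ in one stroke (both steps reverse for $0<\gamma\le 1$). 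You instead keep $K(x)\,dx$ as the reference measure, normalize by $M=\int_{0}^{s}K(x)\,dx$, apply Jensen to $h(x)=-\xi(x)\ln K(x)$ under $d\mu=K(x)\,dx/M$, and only afterwards use $K\le 1$ in the integrated form $M\le s$ to replace $M^{1-\gamma}$ by $s^{1-\gamma}$. Both arguments consume exactly the same two ingredients --- the bound $K\le 1$ and the convexity/concavity split of $t^{\gamma}$ at $\gamma=1$ --- but in opposite order and with different measures, and your bookkeeping (checking that the Jensen direction and the $M\leftrightarrow s$ substitution reinforce in both regimes, and disposing of the degenerate case $M=0$) is accurate. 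What your route buys is the sharper intermediate inequality with constant $M^{1-\gamma}$ in place of $s^{1-\gamma}$, which dominates the stated bound in both regimes and remains meaningful even when $s=+\infty$ provided $M<+\infty$; what the paper's route buys is brevity, since the uniform measure delivers the advertised constant immediately. Your closing H\"older remark is a valid repackaging of the same estimate.
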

\begin{proof}
Let $\gamma\ge1.$ Then, for $0<x<s,$ $K(x)\ge [K(x)]^{\gamma}$. Thus, under the assumptions made, from (\ref{eq2.1}), we obtain
\begin{eqnarray}
CPE_{\gamma}^{\psi}(X)\ge \frac{1}{\Gamma(\gamma+1)}\int_{0}^{s}\alpha_{\gamma}(\beta(x))dx,
\end{eqnarray}
where $\beta(x)=-\xi(x)K(x)\ln K(x)\ge0$ and $\alpha_{\gamma}(t)=t^{\gamma}.$ It can be shown that $t^{\gamma}$ is convex in $t\ge0$, for $\gamma\ge1.$ Thus, from Jensen's integral inequality, the rest of the proof follows. The case for $0<\gamma\le 1$ can be proved similarly.
\end{proof}

\begin{proposition}
	For a nonnegative random variable with support $(0,s)$ and $\gamma>0,$ we have
	\begin{eqnarray}
	CPE_{\gamma}^{\psi}(X)
	\left\{
	\begin{array}{ll}
	\ge \psi(s) CPE_{\gamma}(X), & \mbox{if}~ \psi~ \mbox{is ~decreasing}\\
	\le \psi(s) CPE_{\gamma}(X), & \mbox{if}~ \psi ~\mbox{is ~increasing},
	\end{array}
	\right.
	\end{eqnarray}
	where $CPE_{\gamma}(X)=\frac{1}{\Gamma(\gamma+1)}\int_{0}^{s}K(x)[-\ln K(x)]^{\gamma}dx$ is known as the fractional generalized cumulative past entropy.
\end{proposition}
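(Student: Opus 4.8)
The plan is to exploit the monotonicity of $\psi$ to bound the integrand in (\ref{eq2.1}) pointwise by a multiple of the integrand defining $CPE_{\gamma}(X)$, and then to integrate. The essential observation is that the common factor $K(x)[-\ln K(x)]^{\gamma}$ appearing in both measures is nonnegative on $(0,s)$: since $0\le K(x)\le 1$ we have $-\ln K(x)\ge 0$, hence $[-\ln K(x)]^{\gamma}\ge 0$ for every $\gamma>0$, and $K(x)\ge 0$ as well. This is precisely the feature that lets a monotonicity inequality on $\psi$ propagate to the full integrand.

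First I would treat the decreasing case. For $0<x<s$ and $\psi$ decreasing, we have $\psi(x)\ge\psi(s)$ because $x\le s$. Multiplying this inequality by the nonnegative quantity $K(x)[-\ln K(x)]^{\gamma}$ preserves its direction, giving
\begin{eqnarray*}
\psi(x)K(x)[-\ln K(x)]^{\gamma}\ge \psi(s)K(x)[-\ln K(x)]^{\gamma},\quad 0<x<s.
\end{eqnarray*}
Integrating over $(0,s)$ and dividing through by $\Gamma(\gamma+1)$ then yields $CPE_{\gamma}^{\psi}(X)\ge \psi(s)\,CPE_{\gamma}(X)$, since the constant $\psi(s)$ pulls out of the integral to reconstruct exactly the expression for $CPE_{\gamma}(X)$.

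The increasing case is entirely symmetric: if $\psi$ is increasing then $\psi(x)\le\psi(s)$ for $0<x<s$, and the same multiplication-and-integration argument reverses the inequality to give $CPE_{\gamma}^{\psi}(X)\le\psi(s)\,CPE_{\gamma}(X)$. There is no serious obstacle in this argument; the only point requiring a moment's care is the nonnegativity of $K(x)[-\ln K(x)]^{\gamma}$, which is what guarantees that multiplying the monotonicity inequality by this factor does not flip its sign. Finiteness of the integrals is assumed throughout the paper, so no integrability issue arises and the division by $\Gamma(\gamma+1)$ is harmless.
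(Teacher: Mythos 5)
Your proof is correct and is exactly the ``straightforward'' argument the paper alludes to when it omits the proof: bound $\psi(x)$ by $\psi(s)$ pointwise via monotonicity, multiply by the nonnegative factor $K(x)[-\ln K(x)]^{\gamma}$, and integrate over $(0,s)$. Nothing further is needed.
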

\begin{proof}
The proof is straightforward, and thus it is omitted.
\end{proof}

\begin{proposition}
	Let $X$ be an absolutely continuous random variable with support $(0,s)$ with mean $E(X)=\mu<+\infty$. Then,
	\begin{itemize}
		\item[(i)] $CPE_{\gamma}^{\psi}(X)\ge \frac{1}{\Gamma(\gamma+1)}\int_{0}^{s}\psi(x)K(x)[1-K(x)]^{\gamma}dx$;
		\item[(ii)] $CPE_{\gamma}^{\psi}(X)\ge D(\gamma)e^{H(X)},$ where $D(\gamma)=e^{\int_{0}^{1}\ln[\psi(K^{-1}(u))u(-\ln u)^{\gamma}]du}$ and $H(X)$ is the differential entropy of $X$;
		\item[(iii)] $CPE_{\gamma}^{\psi}(X)\ge \tau_{\gamma}^{\psi}(\mu),$ provided $\psi$ is decreasing.
	\end{itemize}
	\end{proposition}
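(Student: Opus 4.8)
The plan is to handle the three bounds separately, each time reducing the claim to a single standard inequality applied to the defining integral (\ref{eq2.1}). For part (i), I would start from (\ref{eq2.1}) and observe that $\psi(x)K(x)\ge 0$ on all of $(0,s)$, so it suffices to compare the two integrands pointwise. The key elementary fact is $-\ln t\ge 1-t$ for every $t\in(0,1]$ (equivalently $\ln t\le t-1$), which I apply with $t=K(x)$. Since both $-\ln K(x)$ and $1-K(x)$ are nonnegative and $r\mapsto r^{\gamma}$ is nondecreasing on $[0,\infty)$ for $\gamma>0$, raising to the power $\gamma$ preserves the inequality, giving $[-\ln K(x)]^{\gamma}\ge[1-K(x)]^{\gamma}$. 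Multiplying by $\psi(x)K(x)/\Gamma(\gamma+1)\ge 0$ and integrating over $(0,s)$ yields (i) at once.

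For part (ii), I would pass to the quantile scale via the substitution $u=K(x)$, under which $dx=du/k(K^{-1}(u))$, so that
\[
CPE_{\gamma}^{\psi}(X)=\frac{1}{\Gamma(\gamma+1)}\int_{0}^{1}\frac{\psi(K^{-1}(u))\,u\,(-\ln u)^{\gamma}}{k(K^{-1}(u))}\,du,
\]
while the same change of variable rewrites the differential entropy as $H(X)=-\int_{0}^{1}\ln k(K^{-1}(u))\,du$. Writing $F(u)$ for the nonnegative integrand above, I would then invoke Jensen's inequality for the concave map $\ln$ on the probability space $\big((0,1),\mathrm{Leb}\big)$, namely $\ln\!\int_{0}^{1}F\,du\ge\int_{0}^{1}\ln F\,du$, i.e.\ the integral arithmetic--geometric mean inequality. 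Exponentiating and splitting the logarithm of the product $\psi(K^{-1}(u))u(-\ln u)^{\gamma}/k(K^{-1}(u))$ separates out exactly $D(\gamma)$ and $e^{H(X)}$, delivering the stated lower bound; the one piece of bookkeeping is to track the constant factor coming from $\Gamma(\gamma+1)$ and absorb it consistently into $D(\gamma)$.

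For part (iii), I would appeal directly to Proposition \ref{prop2.2}, which gives the representation $CPE_{\gamma}^{\psi}(X)=E[\tau_{\gamma}^{\psi}(X)]$. Since the paper has already recorded (in the ordering subsection) that $\tau_{\gamma}^{\psi}$ is convex whenever $\psi$ is decreasing, Jensen's inequality applied to this convex function yields $E[\tau_{\gamma}^{\psi}(X)]\ge\tau_{\gamma}^{\psi}(E[X])=\tau_{\gamma}^{\psi}(\mu)$, which is the claim; here the hypothesis $\mu<+\infty$ is precisely what makes $\tau_{\gamma}^{\psi}(\mu)$ meaningful.

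The routine parts are (i) and (iii); the main obstacle is the analytic care required in (ii). I must ensure that $F(u)>0$ almost everywhere and that $\ln F$ is integrable on $(0,1)$, so that both $D(\gamma)$ and $H(X)$ are well defined and finite, and I must confirm that Jensen's inequality is applicable, i.e.\ that $\int_{0}^{1}F\,du<\infty$, which holds because $CPE_{\gamma}^{\psi}(X)<+\infty$ is assumed throughout. Correctly matching the $\Gamma(\gamma+1)$ constant across the two sides is the single place where a placement error could creep in.
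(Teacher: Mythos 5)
Your proposal is correct and follows essentially the same route as the paper: part (i) via $\ln u\le u-1$ for $0<u<1$, part (iii) via Jensen's inequality applied to the convex function $\tau_{\gamma}^{\psi}$ from Proposition \ref{prop2.2}, and part (ii) via the integral arithmetic--geometric mean inequality, where your Jensen step on the quantile scale is exactly the paper's log-sum inequality (with $f=k$, $g=\psi K(-\ln K)^{\gamma}$) after the change of variable $u=K(x)$. One caveat: both your derivation and the paper's actually yield $\Gamma(\gamma+1)\,CPE_{\gamma}^{\psi}(X)\ge D(\gamma)e^{H(X)}$, and since $D(\gamma)$ is fixed by the statement the factor $\Gamma(\gamma+1)$ cannot literally be ``absorbed'' into it (the stated bound only follows outright when $\Gamma(\gamma+1)\le 1$, i.e.\ $0<\gamma\le 1$); this constant discrepancy is inherited from the paper, not introduced by you.
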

\begin{proof} The first part of this proposition follows from the relation $\ln u\le u-1$, for $0<u<1$. To prove the second part,
from the log-sum inequality, we have
\begin{eqnarray}
\int_{0}^{s}k(x)\ln \frac{k(x)}{\psi(x)K(x)[-\ln K(x)]^{\gamma}}dx&\ge& \ln \frac{1}{\int_{0}^{s}\psi(x)K(x)[-\ln K(x)]^{\gamma}dx}\nonumber\\
&=& -CPE_{\gamma}^{\psi}(X).
\end{eqnarray}
Now, the rest of the proof follows using the arguments as in the proof of Theorem $2$ of \cite{xiong2019fractional}.
Third part follows from Jensen's inequality.
\end{proof}

We end this subsection with the following result, which provides bounds of the WFGCPE of $X_{2}$, where the CDF of $X_{2}$ is given by (\ref{eq2.19}).
\begin{proposition}
Let $X_{1}$ and $X_{2}$ be two random variables with CDFs $K_{1}$ and $K_{2}$, respectively. Further, assume that the random variables satisfy proportional reversed hazard model described in (\ref{eq2.19}).  Then,
\begin{eqnarray*}
	CPE_{\gamma}^{\psi}(X_{2})
	\left\{
	\begin{array}{ll}
		\le \eta^{\gamma} CPE_{\gamma}^{\psi}(X_{1}), & \mbox{for}~ \eta\ge1\\
		\ge \eta^{\gamma} CPE_{\gamma}^{\psi}(X_{1}), & \mbox{for}~ 0<\eta\le1.
	\end{array}
	\right.
\end{eqnarray*}
\end{proposition}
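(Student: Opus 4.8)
The plan is to substitute the proportional reversed hazard relation directly into the defining integral (\ref{eq2.1}) for $CPE_\gamma^\psi(X_2)$ and then reduce the entire comparison to an elementary pointwise inequality between $[K_1(x)]^\eta$ and $K_1(x)$. I take (\ref{eq2.19}) to be the standard form of the model, namely $K_2(x)=[K_1(x)]^\eta$ with $\eta>0$.

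First I would write out
$$CPE_\gamma^\psi(X_2)=\frac{1}{\Gamma(\gamma+1)}\int_0^s \psi(x)\,[K_1(x)]^\eta\,\big[-\ln\big([K_1(x)]^\eta\big)\big]^\gamma\,dx.$$
Since $-\ln\big([K_1(x)]^\eta\big)=-\eta\ln K_1(x)$ and $\eta>0$, the logarithmic factor pulls out a clean power, $\big[-\ln([K_1(x)]^\eta)\big]^\gamma=\eta^\gamma\big[-\ln K_1(x)\big]^\gamma$, giving
$$CPE_\gamma^\psi(X_2)=\frac{\eta^\gamma}{\Gamma(\gamma+1)}\int_0^s \psi(x)\,[K_1(x)]^\eta\,\big[-\ln K_1(x)\big]^\gamma\,dx.$$
By contrast, $\eta^\gamma CPE_\gamma^\psi(X_1)$ carries exactly the same constant and the same integrand, except with $K_1(x)$ in place of $[K_1(x)]^\eta$.

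The decisive step is then a termwise comparison of the two integrands. For $0<x<s$ we have $0<K_1(x)\le1$, so monotonicity of $t\mapsto t^\eta$ on $(0,1]$ yields $[K_1(x)]^\eta\le K_1(x)$ when $\eta\ge1$ and $[K_1(x)]^\eta\ge K_1(x)$ when $0<\eta\le1$. Since the remaining factors $\psi(x)\ge0$ and $\big[-\ln K_1(x)\big]^\gamma\ge0$ are nonnegative throughout the support, these pointwise inequalities are preserved under multiplication and integration, which delivers the two asserted bounds simultaneously.

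I do not expect a genuine analytic obstacle here. The only place meriting a word of care is the boundary behaviour, where $K_1(x)\to0$ or $K_1(x)\to1$ and the integrand degenerates; there I would invoke the convention $0\cdot\ln 0=0$ adopted in the paper to confirm that the integrands vanish appropriately, and observe that finiteness of $CPE_\gamma^\psi(X_1)$ transfers to $CPE_\gamma^\psi(X_2)$ through the common factor $\eta^\gamma$, so that the comparison is between finite quantities.
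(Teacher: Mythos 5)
Your proof is correct: substituting $K_2=[K_1]^\eta$ into (\ref{eq2.1}), extracting $\eta^\gamma$ from the logarithmic factor, and comparing $[K_1(x)]^\eta$ with $K_1(x)$ pointwise on $(0,1]$ is exactly the elementary argument the paper has in mind when it states ``the proof is simple, and thus it is omitted.'' The only minor imprecision is your closing remark about finiteness: for $0<\eta\le 1$ the inequality runs the wrong way to transfer finiteness from $CPE_{\gamma}^{\psi}(X_1)$ to $CPE_{\gamma}^{\psi}(X_2)$, but this is harmless since the stated inequalities hold in the extended reals regardless.
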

\begin{proof}
The proof is simple, and thus it is omitted.
\end{proof}
\subsection{Connection with fractional calculus}
Fractional calculus and its widely application have recently been paid more and more attentions. We refer to \cite{miller1993introduction} and \cite{gorenflo2008fractional} for more recent development on fractional calculus. Several known forms of the fractional integrals have been proposed in the literature. Among these, the Riemann-Liouville fractional integral of order $\gamma> 0$ has been studied extensively for its applications. See, for instance \cite{dahmani2010new}, \cite{romero2013k} and \cite{tuncc2013new}. Let $\gamma>0$ and $f\in L^{1}(a,b),~a\ge0$. Then, the left-sided Riemann-Liouville fractional integral in the interval $[a,b]$ is defined as follows
\begin{eqnarray}\label{eq2.15}
J_{a^{+}}^{\gamma}f(t)=\frac{1}{\Gamma(\gamma)}\int_{a}^{t}\frac{f(\tau)}{(t-\tau)^{1-\gamma}}d\tau,~t\in[a,b],
\end{eqnarray}
where $f$ is a real-valued continuous function. We recall that the notion of the left-sided Riemann-Liouville fractional integral given by (\ref{eq2.15}) can be elongated with respect to a strictly increasing function $h(.)$. In addition to this strictly increasing property, we further assume that the first order derivative $h'(.)$ is continuous in the interval $(a,b)$. Then, for $\gamma>0,$ the left-sided Riemann-Liouville fractional integral of $f$ with respect to $h$ is given by
\begin{eqnarray}\label{eq2.16}
J_{a^{+};h}^{\gamma}f(t)=\frac{1}{\Gamma(\gamma)}\int_{a}^{t}\frac{h'(\tau)f(\tau)}{(h(t)-h(\tau))^{1-\gamma}}d\tau,~t\in[a,b].
\end{eqnarray}
One may refer to \cite{samko1993fractional} (Section $18.2$) for the representation given in  (\ref{eq2.16}).  Now, we will notice that the WFGCPE can be expressed in terms of the limits of the integral (\ref{eq2.16}) after suitable choices of the functions $f(x)$ and $h(x)$, that is, the fractional nature of the proposed measure is justified. Let us take
$$h(x)=\ln K(x)~~\mbox{and}~~f(x)=\frac{\psi(x)(K(x))^{2}}{k(x)}.$$
Then,
\begin{eqnarray}
\lim_{a\rightarrow 0,~t\rightarrow s}J_{a^{+};h}^{\gamma+1}f(t)&=&\frac{1}{\Gamma(\gamma)}\int_{0}^{s}\psi(x)K(x)[-\ln K(x)]^{\gamma}dx\nonumber\\
&=&CPE_{\gamma}^{\psi}(X).
\end{eqnarray}

\subsection{Proportional reversed hazards model}
Let $X$ be a nonnegative absolutely continuous random variable with distribution function $K$ and density function $k.$ Here, $X$ may be treated as the lifetime of a unit. If $\lambda(t)=\frac{d}{dt}\ln K(t)$ denotes the reversed hazard rate of $X$, then $\lambda(t)dt$ represents the conditional probability the unit stopped working in an infinitesimal interval of width $dt$ preceding $t$, given that the unit failed before $t$. In otherwords, $\lambda(t)dt$ is the probability of failing in the interval $(t-dt,t)$ given that the unit is found failed at time $t$.   Let $X_{1}$ and $X_{2}$ be two random variables with PDFs $k_{1}$ and $k_{2}$, CDFs $K_{1}$ and $K_{2}$ and reversed hazard rate functions $\lambda_{1}$ and $\lambda_{2}$, respectively. It is well known that $X_{1}$ and $X_{2}$ have proportional reversed hazard rate model if
\begin{eqnarray}\label{eq2.18}
\lambda_2(x)=\eta \lambda_{1}(x)=\eta\frac{k_{1}(x)}{K_{1}(x)},
\end{eqnarray}
where $\eta>0$ is known as the proportionality constant. Note that (\ref{eq2.18}) is equivalent to the model
\begin{eqnarray}\label{eq2.19}
K_{2}(x)=[K_{1}(x)]^{\eta},~~x\in \mathbb{R},~~\eta>0,
\end{eqnarray}
where $K_{1}$ is the baseline distribution function (see \cite{gupta1998modeling}, \cite{di2000some} and \cite{gupta2007proportional}). The PDF of $X_2$ is
\begin{eqnarray}\label{eq2.20}
k_{2}(x)=\eta (K_{1}(x))^{\eta-1}k_{1}(x),~~x>0,~\eta>0.
\end{eqnarray}
Next, we evaluate the WFGCPE of $X_{2}$. Making use of (\ref{eq2.20}),
from (\ref{eq2.1}) and (\ref{eq2.19}), we have after some standard calculations that
\begin{eqnarray}\label{eq2.21}
CPE_{\gamma}^{\psi}(X_{2})&=&\frac{1}{\Gamma(\gamma+1)}\int_{0}^{s}\psi(x)(K_{1}(x))^{\eta}[-\ln(K_{1}(x))^{\eta}]^{\gamma}dx\nonumber\\
&=&-\frac{1}{\Gamma(\gamma+1)}\Bigg[\int_{0}^{s}x\psi(x)[-\ln(K_{1}(x))^{\eta}]^{\gamma}\eta (K_{1}(x))^{\eta-1}k_{1}(x)dx\nonumber\\
&~&-\gamma\int_{0}^{s}x\psi(x)[-\ln(K_{1}(x))^{\eta}]^{\gamma-1}\eta (K_{1}(x))^{\eta-1}k_{1}(x)dx\nonumber\\
&~&+\int_{0}^{s}\frac{x\psi'(x)}{\eta \lambda_1(x)}[-\ln(K_{1}(x))^{\eta}]^{\gamma}\eta (K_{1}(x))^{\eta-1}k_{1}(x)dx\Bigg]\nonumber\\
&=&-\frac{1}{\Gamma(\gamma+1)}\Bigg[\int_{0}^{s}x\psi(x)[-\ln K_{2}(x)]^{\gamma}k_{2}(x)dx\nonumber\\
&~&-\gamma\int_{0}^{s}x\psi(x)[-\ln K_{2}(x)]^{\gamma-1}k_{2}(x)dx\nonumber\\
&~&+\int_{0}^{s}\frac{x\psi'(x)}{\eta \lambda_1(x)}[-\ln K_{2}(x)]^{\gamma}k_{2}(x)dx\Bigg].
\end{eqnarray}
Now, denote
\begin{eqnarray}\label{eq2.22}
\mathcal{E}_{2}^{\eta}(\gamma)=\frac{1}{\Gamma(\gamma)}E\left[X_2 \psi(X_2)[-\ln K_{2}(X_2)]^{\gamma-1}\right]
\end{eqnarray}
and
\begin{eqnarray}\label{eq2.23}
\tilde{\mathcal{E}}_{2}^{\eta}(\gamma)=\frac{1}{\Gamma(\gamma)}E\left[\frac{X_2 \psi'(X_2)}{\lambda_1(X_{2})}[-\ln K_{2}(X_2)]^{\gamma-1}\right].
\end{eqnarray}
Thus, using (\ref{eq2.22}) and (\ref{eq2.23}) in (\ref{eq2.21}), the following proposition can be obtained.
\begin{proposition}\label{prop2.3}
Let (\ref{eq2.19}) hold. Then, the WFGCPE of $X_{2}$ can be expressed as
\begin{eqnarray}\label{eq2.24}
CPE_{\gamma}^{\psi}(X_{2})=\mathcal{E}_{2}^{\eta}(\gamma)-\mathcal{E}_{2}^{\eta}(\gamma+1)-\eta^{-1}\tilde{\mathcal{E}}_{2}^{\eta}(\gamma+1),~~\gamma>0,
\end{eqnarray}
provided the associated expectations are finite.
\end{proposition}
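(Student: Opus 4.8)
The plan is to read the identity (\ref{eq2.24}) off the representation (\ref{eq2.21}), in which $CPE_{\gamma}^{\psi}(X_{2})$ has already been written as $-\frac{1}{\Gamma(\gamma+1)}$ times a sum of three integrals against $K_{2}$ and $k_{2}$. Each of those integrals is, up to a gamma factor, one of the expectations in (\ref{eq2.22})--(\ref{eq2.23}), so the whole argument reduces to recognising the three expectations and reconciling the gamma constants through $\Gamma(\gamma+1)=\gamma\,\Gamma(\gamma)$.

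Concretely, I would treat the three integrals in (\ref{eq2.21}) in turn. The first, $\int_{0}^{s}x\psi(x)(-\ln K_{2}(x))^{\gamma}k_{2}(x)\,dx$, is $E\big[X_{2}\psi(X_{2})(-\ln K_{2}(X_{2}))^{\gamma}\big]=\Gamma(\gamma+1)\,\mathcal{E}_{2}^{\eta}(\gamma+1)$, so after the prefactor it contributes $-\mathcal{E}_{2}^{\eta}(\gamma+1)$. The second, $\int_{0}^{s}x\psi(x)(-\ln K_{2}(x))^{\gamma-1}k_{2}(x)\,dx=\Gamma(\gamma)\,\mathcal{E}_{2}^{\eta}(\gamma)$, carries the explicit factor $\gamma$; since $\gamma/\Gamma(\gamma+1)=1/\Gamma(\gamma)$, it contributes exactly $+\mathcal{E}_{2}^{\eta}(\gamma)$. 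The third already appears in the reversed-hazard form $\int_{0}^{s}\frac{x\psi'(x)}{\eta\lambda_{1}(x)}(-\ln K_{2}(x))^{\gamma}k_{2}(x)\,dx=\eta^{-1}\Gamma(\gamma+1)\,\tilde{\mathcal{E}}_{2}^{\eta}(\gamma+1)$, contributing $-\eta^{-1}\tilde{\mathcal{E}}_{2}^{\eta}(\gamma+1)$. Summing the three contributions gives precisely (\ref{eq2.24}), the finiteness hypothesis ensuring each expectation is defined.

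The only step that needs care is the one already performed upstream, namely the passage from the first line of (\ref{eq2.21}) to its integral decomposition. This is an integration by parts with $dv=dx$ and $u=\psi(x)K_{2}(x)(-\ln K_{2}(x))^{\gamma}$, whose boundary term $\big[x\,\psi(x)K_{2}(x)(-\ln K_{2}(x))^{\gamma}\big]_{0}^{s}$ I would verify vanishes: at $x=s$, $K_{2}(s)=1$ forces $(-\ln K_{2}(s))^{\gamma}=0$ for $\gamma>0$, while at $x=0$ the factor $x\,K_{2}(x)(-\ln K_{2}(x))^{\gamma}\to0$, since $t(-\ln t)^{\gamma}\to0$ as $t\to0^{+}$ and $x\to0$. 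Differentiating $u$ by the product and chain rules yields the three integrands, and substituting $K_{2}=k_{2}/(\eta\lambda_{1})$ recasts the $\psi'$ term into the form used in (\ref{eq2.23}). Given (\ref{eq2.21}), however, the remaining work is pure gamma-function bookkeeping and poses no real obstacle.
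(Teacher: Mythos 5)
Your proof is correct and takes essentially the same approach as the paper: the paper's own proof consists of reading (\ref{eq2.24}) off the decomposition (\ref{eq2.21}) via the definitions (\ref{eq2.22})--(\ref{eq2.23}) together with $\Gamma(\gamma+1)=\gamma\,\Gamma(\gamma)$, exactly as you do. Your additional verification of the upstream integration by parts (vanishing boundary terms and the substitution $K_{2}=k_{2}/(\eta\lambda_{1})$) accurately fills in what the paper dismisses as ``some standard calculations.''
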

We note that when $\psi(x)=1$, (\ref{eq2.24}) reduces to Eq. $(19)$ of \cite{di2021fractional}. An illustration of the result in Proposition \ref{prop2.3} is provided in the following example when $\psi(x)=x.$
\begin{example}
	Let $K_{1}(x)=x,~0<x<1$ be the baseline distribution function. We will find the WFGCPE of a random variable $X_{2}$ with distribution function $K_{2}(x)=[K_{1}(x)]^{c}=x^c,~0<x<1,~c>0$. Under this set up, from (\ref{eq2.22}), for $\psi(x)=x$, we obtain
	\begin{eqnarray}\label{eq2.25}
	\mathcal{E}_{2}^{\eta}(\gamma)=\frac{c^{\gamma}}{(2+c)^{\gamma}}=\tilde{\mathcal{E}}_{2}^{\eta}(\gamma).
	\end{eqnarray}
	Now, using (\ref{eq2.25})  in (\ref{eq2.24}), we get
	\begin{eqnarray*}
		 CPE_{\gamma}^{\psi(x)=x}(X_{2})&=&\frac{c^{\gamma}}{(2+c)^{\gamma}}-\frac{c^{\gamma+1}}{(2+c)^{\gamma+1}}-\frac{c^{\gamma}}{(2+c)^{\gamma+1}}\nonumber\\
		&=&\frac{c^{\gamma}}{(2+c)^{\gamma+1}},
	\end{eqnarray*}
which coincides with the case of the Power distribution as in Table \ref{tab1} for $b=1$.
\end{example}

The WFGCPE of $X_{2}$ can be represented in terms of the WFGCPE with different weight functions as follows
\begin{eqnarray}
CPE_{\gamma}^{\psi}(X_{2})=-CPE_{\gamma}^{\psi_{1}}(X_{2})-\eta CPE_{\gamma}^{\psi_{2}}(X_{2})+\eta \gamma^{-1}CPE_{\gamma+1}^{\psi_{2}}(X_{2}),
\end{eqnarray}
where $\psi$ is increasing, $\psi_{1}(x)=x \psi'(x)$ and $\psi_{2}(x)=x\psi(x)\lambda_{1}(x)$. Next, we show that a recurrence relation can be constructed for the WFGCPE of $X_{2}$. It is shown that the WFGCPE of $X_{2}$ of order $(\gamma+1)$ can be expressed in terms of that of order $\gamma$. From (\ref{eq2.24}),
\begin{eqnarray}\label{eq2.27}
CPE_{\gamma+1}^{\psi}(X_{2})&=&\mathcal{E}_{2}^{\eta}(\gamma+1)-\mathcal{E}_{2}^{\eta}(\gamma+2)-\eta^{-1}\tilde{\mathcal{E}}_{2}^{\eta}(\gamma+2)\nonumber\\
&=& \mathcal{E}_{2}^{\eta}(\gamma)-\mathcal{E}_{2}^{\eta}(\gamma+2)-\eta^{-1}[\tilde{\mathcal{E}}_{2}^{\eta}(\gamma+1)+\tilde{\mathcal{E}}_{2}^{\eta}(\gamma+2)]-CPE_{\gamma}^{\psi}(X_{2}).
\end{eqnarray}
Further, when $\psi(x)=1$, (\ref{eq2.27}) reduces to Eq. (22) of \cite{di2021fractional}. We note that the recurrence relation in (\ref{eq2.27}) can be generalized for any integer $n\ge1,$ which is presented in the following proposition.
\begin{proposition} \label{prop2.4}
Let $n$ be a positive integer. Then, under the model in (\ref{eq2.19}), for $\eta>0$ and $\gamma>0$, we obtain
\begin{eqnarray}
CPE_{\gamma+n}^{\psi}(X_{2})&=&\mathcal{E}_{2}^{\eta}(\gamma+n)-\mathcal{E}_{2}^{\eta}(\gamma+n+1)+(-1)^{n-1}[\mathcal{E}_{2}^{\eta}(\gamma)-\mathcal{E}_{2}^{\eta}(\gamma+1)]\nonumber\\
&~&+\eta^{-1}[(-1)^{n}\tilde{\mathcal{E}}_{2}^{\eta}(\gamma+1)-\tilde{\mathcal{E}}_{2}^{\eta}(\gamma+n+1)]+(-1)^{n}CPE_{\gamma}^{\psi}(X_{2}).
\end{eqnarray}
\end{proposition}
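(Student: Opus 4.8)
The plan is to prove the identity by induction on $n$, using the single-step recurrence already established in (\ref{eq2.27}) as the engine. The base case $n=1$ requires no separate work: setting $n=1$ in the claimed formula and expanding $(-1)^{0}$ and $(-1)^{1}$ reproduces $CPE_{\gamma+1}^{\psi}(X_{2})=\mathcal{E}_{2}^{\eta}(\gamma)-\mathcal{E}_{2}^{\eta}(\gamma+2)-\eta^{-1}[\tilde{\mathcal{E}}_{2}^{\eta}(\gamma+1)+\tilde{\mathcal{E}}_{2}^{\eta}(\gamma+2)]-CPE_{\gamma}^{\psi}(X_{2})$, which is exactly (\ref{eq2.27}). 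So the induction starts for free.

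For the inductive step, I would assume the formula at level $n$ and target level $n+1$. The key move is to invoke the one-step relation (\ref{eq2.27}) with $\gamma$ replaced throughout by $\gamma+n$; this is legitimate since (\ref{eq2.27}) was derived from (\ref{eq2.24}), which holds for every $\gamma>0$. This expresses the $(n+1)$-th entropy through the $n$-th one as $CPE_{\gamma+n+1}^{\psi}(X_{2})=\mathcal{E}_{2}^{\eta}(\gamma+n)-\mathcal{E}_{2}^{\eta}(\gamma+n+2)-\eta^{-1}[\tilde{\mathcal{E}}_{2}^{\eta}(\gamma+n+1)+\tilde{\mathcal{E}}_{2}^{\eta}(\gamma+n+2)]-CPE_{\gamma+n}^{\psi}(X_{2})$, after which I substitute the inductive hypothesis for the trailing $CPE_{\gamma+n}^{\psi}(X_{2})$ and collect like terms.

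The cancellations are the heart of the argument. The $\mathcal{E}_{2}^{\eta}(\gamma+n)$ carried in from (\ref{eq2.27}) cancels the $-\mathcal{E}_{2}^{\eta}(\gamma+n)$ produced by the hypothesis, and the two occurrences of $\tilde{\mathcal{E}}_{2}^{\eta}(\gamma+n+1)$ (one from each source, of opposite sign under the leading minus) annihilate. What survives, after distributing the overall minus sign, flips every residual power of $(-1)$: the block $(-1)^{n-1}[\mathcal{E}_{2}^{\eta}(\gamma)-\mathcal{E}_{2}^{\eta}(\gamma+1)]$ becomes $(-1)^{n}[\mathcal{E}_{2}^{\eta}(\gamma)-\mathcal{E}_{2}^{\eta}(\gamma+1)]$, the term $(-1)^{n}\tilde{\mathcal{E}}_{2}^{\eta}(\gamma+1)$ becomes $(-1)^{n+1}\tilde{\mathcal{E}}_{2}^{\eta}(\gamma+1)$, and $(-1)^{n}CPE_{\gamma}^{\psi}(X_{2})$ becomes $(-1)^{n+1}CPE_{\gamma}^{\psi}(X_{2})$, together with the boundary terms $\mathcal{E}_{2}^{\eta}(\gamma+n+1)-\mathcal{E}_{2}^{\eta}(\gamma+n+2)$ and $-\eta^{-1}\tilde{\mathcal{E}}_{2}^{\eta}(\gamma+n+2)$. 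This reproduces the claimed formula at level $n+1$ exactly, closing the induction.

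The main obstacle is purely bookkeeping: correctly propagating the alternating signs $(-1)^{n}$ under the leading minus in (\ref{eq2.27}), and verifying that precisely the middle-index terms $\mathcal{E}_{2}^{\eta}(\gamma+n)$ and $\tilde{\mathcal{E}}_{2}^{\eta}(\gamma+n+1)$ telescope away while the boundary terms at indices $\gamma,\gamma+1$ and $\gamma+n+1,\gamma+n+2$ persist. No new analytic input is needed beyond the finiteness of the relevant expectations, which is already assumed; the whole step is algebraic manipulation of (\ref{eq2.24}) and (\ref{eq2.27}).
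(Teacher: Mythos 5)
Your proof is correct: the base case $n=1$ does reduce exactly to (\ref{eq2.27}), and in the inductive step the terms $\mathcal{E}_{2}^{\eta}(\gamma+n)$ and $\eta^{-1}\tilde{\mathcal{E}}_{2}^{\eta}(\gamma+n+1)$ cancel while the leading minus flips each residual power of $(-1)$, yielding the level-$(n+1)$ formula precisely as you describe. This is essentially the same argument the paper intends — its proof is omitted with a pointer to the induction on the one-step recurrence in the cited reference, which is exactly what you have carried out (here in the weighted setting, where the extra $\tilde{\mathcal{E}}$ terms survive rather than vanish).
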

\begin{proof}
	The proof follows using similar arguments as in the proof of Proposition $2.4$ of \cite{di2021fractional}.  Thus, it is omitted.
\end{proof}
We note that for the weight function $\psi(x)=1$, Proposition \ref{prop2.4} coincides with Proposition $2.4$ of \cite{di2021fractional}. In this case, the terms $\tilde{\mathcal{E}}_{2}^{\eta}(\gamma+1)$ and $\tilde{\mathcal{E}}_{2}^{\eta}(\gamma+n+1)$ become zero.

\section{Empirical WFGCPE\setcounter{equation}{0}}
Let $T=(T_{1},\ldots, T_{n})$ be a random sample of size $n$ drawn from a population with CDF $K.$ The order statistics of the sample $T$ are the ordered sample values, denoted by $T_{1:n}\le \ldots \le T_{n:n}.$ Denote the indicator function of the set $A$ by $I_{A},$ where
$$
I_{A}
=\left\{
\begin{array}{ll}
1, & \mbox{if }~A~ \mbox{is~ true}\\
0, & \mbox{otherwise}.
\end{array}
\right.
$$
The empirical CDF on the basis of the random sample $T$ is given by
\begin{eqnarray}\label{eq3.1}
\widetilde{K}_{n}(x)=\frac{1}{n}\sum_{i=1}^{n}I_{\{T_{i}\le x\}}
=\left\{
\begin{array}{ll}
0, & \mbox{if} ~~x<T_{1:n}\\
\frac{l}{n}, & \mbox{if}~~T_{l:n}\le x <T_{l+1:n}\\
1, & \mbox{if} ~~ x\ge T_{n:n},
\end{array}
\right.
\end{eqnarray}
where $l=1,\ldots,n-1.$ Using (\ref{eq3.1}), for $\gamma>0$ and $\psi(x)\ge0$, the WFGCPE given by (\ref{eq2.1}) can be expressed as
\begin{eqnarray}\label{eq3.2}
CPE_{\gamma}^{\psi}(\widehat{K}_{n})&=&\frac{1}{\Gamma(\gamma+1)}\int_{0}^{s}\psi(x)\widehat{K}_{n}(x)[-\ln \widehat{K}_{n}(x)]^{\gamma}dx\nonumber\\
&=& \frac{1}{\Gamma(\gamma+1)}\sum_{l=1}^{n-1}\int_{T_{l:n}}^{T_{l+1:n}}\psi(x)\widehat{K}_{n}(x)[-\ln \widehat{K}_{n}(x)]^{\gamma}dx\nonumber\\
&=&\frac{1}{\Gamma(\gamma+1)}\sum_{l=1}^{n-1}Z_{l}\left(\frac{l}{n}\right)\left(-\ln \frac{l}{n}\right)^{\gamma},
\end{eqnarray}
where $Z_{l}=\Psi(T_{l+1:n})-\Psi(T_{l:n})$ and $\Psi(x)=\int_{0}^{x}\psi(x)dx$. Note that when $\psi(x)=1$, we get  the empirical fractional generalized cumulative entropy (see \cite{di2021fractional}) from (\ref{eq3.2}).
For $\psi(x)=x$ and $\gamma=1$, (\ref{eq3.2}) coincides with the empirical weighted cumulative entropy proposed by \cite{misagh2011weighted}. Further, let $\gamma$ be a natural number. Then, for $\psi(x)=x,$ (\ref{eq3.2}) reduces to the empirical shift-dependent generalized cumulative entropy due to \cite{kayal2019shift}. Thus, we observe that the proposed empirical estimate in (\ref{eq3.2}) is a generalization of several empirical estimates developed so far.  In the following theorem, we show that the empirical WFGCPE converges to the WFGCPE almost surely.
\begin{theorem}
	Consider a nonnegative absolutely continuous random variable $X$ with CDF $K.$ Then, for $X\in L^{p},~p>2$, we have
	$$CPE_{\gamma}^{\psi}(\widehat{K}_{n})\rightarrow CPE_{\gamma}^{\psi}(X),$$ almost surely.
\end{theorem}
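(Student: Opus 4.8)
The plan is to reduce the statement to the uniform almost sure convergence of the empirical distribution function and then to exchange limit and integral by dominated convergence, with the moment hypothesis $X\in L^{p}$ ($p>2$) entering only through the control of the tail. First I would record the two facts on which everything rests. By the Glivenko--Cantelli theorem, $D_{n}:=\sup_{x\ge 0}|\widehat K_{n}(x)-K(x)|\to 0$ almost surely, so I fix once and for all a sample path in this almost sure event. Secondly, setting $\phi(u)=u(-\ln u)^{\gamma}$ with the convention $\phi(0)=\phi(1)=0$, the function $\phi$ is continuous on the compact interval $[0,1]$, hence uniformly continuous and bounded by $M_{\gamma}:=\gamma^{\gamma}e^{-\gamma}$ (its maximum, attained at $u=e^{-\gamma}$). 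Writing
\[
CPE_{\gamma}^{\psi}(\widehat K_{n})-CPE_{\gamma}^{\psi}(X)=\frac{1}{\Gamma(\gamma+1)}\int_{0}^{s}\psi(x)\bigl[\phi(\widehat K_{n}(x))-\phi(K(x))\bigr]\,dx,
\]
uniform continuity of $\phi$ together with $D_{n}\to 0$ yields $\sup_{x}|\phi(\widehat K_{n}(x))-\phi(K(x))|\to 0$ along the fixed path.

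Next I would pass to the limit inside the integral by splitting the range at a large threshold $A$. On the window $[0,A]$ the integrand is dominated by $M_{\gamma}\psi(x)$ with $\int_{0}^{A}\psi<\infty$, so the uniform convergence just established forces $\int_{0}^{A}\psi\,[\phi(\widehat K_{n})-\phi(K)]\to 0$ as $n\to\infty$ for each fixed $A$. When the support is bounded ($s<+\infty$) this already closes the argument, since $\int_{0}^{s}\psi<\infty$ and no tail remains. For $s=+\infty$ two tail terms must be controlled: $\int_{A}^{\infty}\psi\,\phi(K)$, which tends to $0$ as $A\to\infty$ because it is the tail of the finite quantity $CPE_{\gamma}^{\psi}(X)$ (finiteness guaranteed by $X\in L^{p}$), and the empirical tail $\int_{A}^{\infty}\psi\,\phi(\widehat K_{n})$, which must be shown small uniformly in $n$. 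For the true tail I would exploit the elementary bound $-\ln u\le (1-u)/u$, giving $\phi(u)\le C_{\gamma}(1-u)^{\gamma}$ for $u$ near $1$, together with Markov's inequality $\bar K(x)\le E|X|^{p}/x^{p}$; this is exactly where the exponent $p>2$ makes the integral $\int^{\infty}\psi(x)\bar K(x)^{\gamma}\,dx$ converge.

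The hard part will be transferring this estimate to the empirical integrand and establishing its uniform-in-$n$ smallness. The difficulty is twofold: $\phi$ is not monotone and neither $\widehat K_{n}\le K$ nor $\widehat K_{n}\ge K$ holds, so one cannot naively dominate $\phi(\widehat K_{n})$ by $\phi(K)$; moreover $\phi(\widehat K_{n})$ is supported on $[T_{1:n},T_{n:n}]$, and its contribution near the largest order statistic $T_{n:n}$ — essentially the extreme spacing $\Psi(T_{n:n})-\Psi(T_{n-1:n})$ weighted by $(1/n)^{\gamma}$ — must be kept under control as $T_{n:n}\to\infty$. It is precisely here that the moment hypothesis $X\in L^{p}$, $p>2$, is invoked, to produce an $n$-independent integrable majorant for both tails (using $\widehat K_{n}(x)\ge K(x)-D_{n}$, hence $1-\widehat K_{n}(x)\le \bar K(x)+D_{n}$, on the tail). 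Once such a majorant is in hand, the dominated convergence theorem drives all three pieces to zero (first $n\to\infty$ for fixed $A$, then $A\to\infty$), giving $CPE_{\gamma}^{\psi}(\widehat K_{n})\to CPE_{\gamma}^{\psi}(X)$ almost surely and completing the proof.
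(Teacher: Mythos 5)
Your overall strategy coincides with the paper's: split the integral into a compact window plus a tail (the paper splits at $x=1$), handle the window via Glivenko--Cantelli and bounded convergence, and handle the tail via dominated convergence using the moment hypothesis; the paper then defers the tail details to Theorem 14 of \cite{tahmasebi2020extension}. Your treatment of the compact part, and of the bounded-support case $s<+\infty$, is correct. The gap is in the unbounded case --- the only case in which the hypothesis $X\in L^{p}$ is not vacuous --- at exactly the step you yourself flag as ``the hard part.'' The inequality $1-\widehat K_{n}(x)\le \bar K(x)+D_{n}$ cannot produce an $n$-independent integrable majorant: $D_{n}$ is constant in $x$, so the right-hand side is bounded below by $D_{n}$ and $\int_{A}^{\infty}\psi(x)D_{n}^{\gamma}\,dx=+\infty$. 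Restricting to the actual support $[A,T_{n:n}]$ of the empirical integrand does not repair this: splitting $(\bar K(x)+D_{n})^{\gamma}\le C_{\gamma}\bigl(\bar K(x)^{\gamma}+D_{n}^{\gamma}\bigr)$ leaves a term of order $D_{n}^{\gamma}\,\Psi(T_{n:n})$, and since $D_{n}\asymp\sqrt{\log\log n/n}$ while $T_{n:n}$ can grow at a rate arbitrarily close to $n^{1/p}$ when only $X\in L^{p}$ is assumed, this term need not tend to zero under the stated hypotheses (with $\psi(x)=x$ it requires roughly $p\gamma\ge 4$). So dominated convergence, as you have set it up, does not close the argument.

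The device that actually controls the empirical tail --- and is the content of the result the paper leans on, going back to \cite{rao2004cumulative} --- is Markov's inequality applied to the \emph{empirical} measure, combined with the strong law of large numbers: for $x>0$,
\[
1-\widehat K_{n}(x)=\frac{1}{n}\sum_{i=1}^{n}I_{\{T_{i}>x\}}\le\frac{1}{x^{p}}\cdot\frac{1}{n}\sum_{i=1}^{n}T_{i}^{p},
\qquad \frac{1}{n}\sum_{i=1}^{n}T_{i}^{p}\longrightarrow E(T_{1}^{p})<+\infty \ \mbox{ a.s.},
\]
so that on almost every sample path, for all large $n$, one has the genuinely $n$-free majorant $\psi(x)\,\widehat K_{n}(x)[-\ln \widehat K_{n}(x)]^{\gamma}\le C_{\gamma}\bigl(E(T_{1}^{p})+1\bigr)^{\gamma}\psi(x)x^{-p\gamma}$ on the tail, to which dominated convergence applies provided $\int_{1}^{\infty}\psi(x)x^{-p\gamma}dx<+\infty$. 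This last proviso also exposes a secondary inaccuracy you inherited from the paper's own loose statement: it is not true that $p>2$ alone makes $\int^{\infty}\psi(x)\bar K(x)^{\gamma}dx$ finite; with $\psi(x)=x$ one needs $p\gamma>2$, so for $\gamma<1$ the moment exponent must be calibrated to $\gamma$ and to the growth of $\psi$, not merely taken larger than $2$.
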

\begin{proof}
	We have
	\begin{eqnarray}
	\frac{\Gamma(\gamma+1)}{(-1)^{\gamma}}CPE_{\gamma}^{\psi}(\widehat{K}_{n})
	&=&\int_{0}^{1}\psi(x)\widehat{K}_{n}(x)\left[\ln \widehat{K}_{n}(x)\right]^{\gamma}dx+
	\int_{1}^{s}\psi(x)\widehat{K}_{n}(x)\left[\ln \widehat{K}_{n}(x)\right]^{\gamma}dx\nonumber\\
	&=&I_{1}+I_{2},~\mbox{say}.
	\end{eqnarray}
	Now, using dominated convergence theorem and Glivenko-Cantelli theorem, the rest of the proof follows as in Theorem $14$ of \cite{tahmasebi2020extension}.
\end{proof}

Next, we consider a data set, which was studied by \cite{abouammoh1994partial}. It represents the ordered lifetimes (in days) of $43$ blood cancer patients, due to one of the Ministry of
Health Hospitals in Saudi Arabia.
\\
---------------------------------------------------------------------------------------------------------------------\\
$115, 181, 255, 418, 441, 461, 516, 739, 743, 789, 807, 865, 924, 983, 1024, 1062, 1063, 1165,
1191,\\
1222, 1222, 1251, 1277, 1290, 1357, 1369, 1408, 1455, 1478, 1549, 1578, 1578, 15999, 1603,
1605,\\ 1696, 1735, 1799, 1815, 1852, 1899, 1925, 1965.$\\
---------------------------------------------------------------------------------------------------------------------\\
Based on this dats set, tet us now compute the values of the WFGCPE with weight functions $\psi(x)=\sqrt{x}$, $\psi(x)=x$ and $\psi(x)=x^{2}$ for various values of $\gamma,$ which are presented in Table \ref{tab3}. Indeed, one can compute the values of the WFGCPE with any positive valued weight functions.
\begin{table}[h!]
	\caption{\label{tab3} Values of the WFGCPE based on the ordered lifetimes (in days) of $43$ blood cancer patients. }
	\begin{center}
		\begin{tabular}{c c c c c c c c}
			\hline \hline
			$\gamma$ & $\psi(x)=\sqrt{x}$ & $\psi(x)=x$ & $\psi(x)=x^2$ \\ [1ex]
			\hline \hline
			 0.25& 24004.3&  881460    & $1.27542\times10^9$\\[1ex]
			\hline
			 0.5& 20065.8 & 707724  & $9.59358\times10^8$\\[1ex]
\hline
			 0.75& 16858.4 & 570814  & $7.23578\times10^8$ \\[1ex]
\hline
			 1.5& 10279.3 &309581  & $3.22149\times 10^8$\\[1ex]
\hline
			 2.75& 4489.63 &114320  & $8.89639\times10^7$\\[1ex]
			
			
			\hline \hline
		\end{tabular}
	\end{center}
\end{table}

\begin{figure}[h]
\begin{center}
\subfigure[]{\label{c1}\includegraphics[height=1.8in]{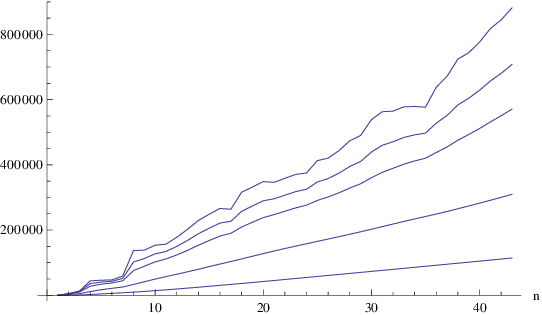}}
\subfigure[]{\label{c1}\includegraphics[height=1.8in]{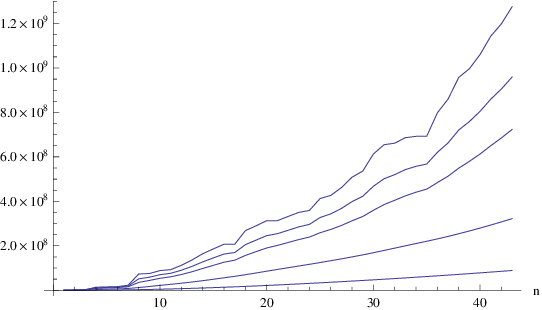}}
\caption{Graphs of the empirical WFGCPE with $(a)$ $\psi(x)=x$ and $(b)$ $\psi(x)=x^2$ based on the ordered lifetimes (in days) of $43$ blood cancer patients, for $\gamma=0.25,0.5,0.75,1.5$ and $2.75$ (from above)}
\end{center}
\end{figure}

Next, we consider examples to illustrate the proposed empirical measure.
\begin{example}\label{ex3.1}
	Let $T=(T_1,\ldots,T_n)$ be a random sample drawn from a population with CDF $K(x)=x^2,~0<x<1.$ Consider $\psi(x)=x.$ It can be shown that $T_{l}^{2}$, $l=1,\ldots,n-1$ follow uniform distribution in the interval $(0,1)$. Further, the sample spacings $T_{l+1:n}^2-T_{l:n}^2$, $l=1,\ldots,n-1$ are independently beta distributed with parameters $1$ and $n$. For details, please refer to \cite{pyke1965spacings}. Thus, from (\ref{eq3.2}), for $\gamma>0,$ we get
	\begin{eqnarray}\label{eq3.3}
	 E[CPE_{\gamma}^{\psi}(\widehat{K}_{n})]=\frac{1}{\Gamma(\gamma+1)}\sum_{l=1}^{n-1}\frac{1}{2(1+n)}\left(\frac{l}{n}\right)\left[-\ln \frac{l}{n}\right]^{\gamma}
	\end{eqnarray}
	and
	\begin{eqnarray}\label{eq3.4}
	 Var[CPE_{\gamma}^{\psi}(\widehat{K}_{n})]=\frac{1}{(\Gamma(\gamma+1))^{2}}\sum_{l=1}^{n-1}\frac{n}{4(1+n)^2(2+n)}\left(\frac{l}{n}\right)^{2}\left[-\ln \frac{l}{n}\right]^{2\gamma}.
	\end{eqnarray}
\end{example}

We present the computed values of the means and variances of the empirical estimator of WFGCPE under the set up explained in Example \ref{ex3.1} in  Table \ref{tab4}.  From Table \ref{tab4}, we observe that for fixed sample sizes, as $\gamma$ increases, the mean and variance of the proposed estimator decrease. Further, for fixed $\gamma,$ the mean and variance respectively increase and decrease, as the sample size increases.

\begin{table}[ht]
	\centering 
	\caption{\label{tab4} Numerical values of
		$E(CPE^{\psi}_{\gamma}(\widehat{\bar{K}}_{n}))$ and  $Var(CPE^{\psi}_{\gamma}(\widehat{\bar{K}}_{n}))$ for the distribution as in Example $3.1.$}
	\scalebox{.95}{\begin{tabular}{c c c c c c c c c c} 
			\hline\hline 
			$\gamma$ & $n$ & $E(CPE^{\psi}_{\gamma}(\widehat{\bar{K}}_{n}))$ &
			$Var(CPE^{\psi}_{\gamma}(\widehat{\bar{K}}_{n}))$  & $\gamma$ & $n$ & $E(CPE^{\psi}_{\gamma}(\widehat{\bar{K}}_{n}))$ &
			$Var(CPE^{\psi}_{\gamma}(\widehat{\bar{K}}_{n}))$\\ [0.5ex] 
			\hline\hline 
			0.25 & 5 & 0.153878 & 0.004609 & 0.5 & 5 & 0.135721 & 0.003395 \\
			~ & 10 & 0.181591 & 0.003434 & ~ & 10 & 0.156472 & 0.002416 \\
			~ & 15 & 0.191238 & 0.002627 & ~ & 15 & 0.163420 & 0.001822 \\
			~ & 30 & 0.200941 & 0.001507 & ~ & 30 & 0.170268 & 0.001034 \\
			~ & 50 & 0.204774 & 0.000956 & ~ & 50 & 0.172941 & 0.000653 \\
			0.75 & 5 & 0.116302 & 0.002472 & 1.5 & 5 & 0.066611 & 0.000968 \\
			~ & 10 & 0.132732 & 0.001734& ~ & 10 & 0.077849 & 0.000712\\
			~ & 15 & 0.138160 & 0.001304 & ~ & 15 & 0.081549 & 0.000538 \\
			~ & 30 & 0.143500 & 0.000738 & ~ & 30 & 0.085119 & 0.000306 \\
			~ & 50 & 0.145593 & 0.000466 & ~ & 50 & 0.086481 & 0.000194 \\
			[1ex] 
			\hline\hline 
	\end{tabular}}
	\label{tb1} 
\end{table}

\begin{example}
	Consider a random sample $T$ from a Weibull population with CDF $K(x)=1-e^{-\theta x^2},~x>0,~\theta>0.$ Using simple transformation theory, it can be established that $T_{i}^{2},~i=1,\ldots,n$ follow exponential distribution with mean $1/\theta$. Further, let $\psi(x)=x.$ Under the present set up, the sample spacings  $T_{l+1:n}^2-T_{l:n}^2$, $l=1,\ldots,n-1$ are independent and exponentialy distributed with mean $1/(\theta(n-l))$ (see  \cite{pyke1965spacings}). Thus, from (\ref{eq3.2}), we obtain
	\begin{eqnarray}
	 E[CPE_{\gamma}^{\psi}(\widehat{K}_{n})]=\frac{1}{\Gamma(\gamma+1)}\sum_{l=1}^{n-1}\frac{1}{2\theta(n-l)}\left(\frac{l}{n}\right)\left[-\ln \frac{l}{n}\right]^{\gamma}
	\end{eqnarray}
	and
	\begin{eqnarray}
	 Var[CPE_{\gamma}^{\psi}(\widehat{K}_{n})]=\frac{1}{(\Gamma(\gamma+1))^{2}}\sum_{l=1}^{n-1}\frac{1}{4\theta^{2}(n-l)^{2}}\left(\frac{l}{n}\right)^{2}\left[-\ln \frac{l}{n}\right]^{2\gamma}.
	\end{eqnarray}
\end{example}

\begin{example}
	Let $T$ be a random sample from a population with absolutely continuous CDF $K$ and PDF $k$. Let $\psi(x)=k(x).$ Then, $Z_{l}=K(T_{l+1:n})-K(T_{l:n}),~l=1,\ldots,n-1$ are independent and beta distributed random variables with parameters $1$ and $n$. We refer to \cite{pyke1965spacings} for details. Thus, similar to (\ref{eq3.3}) and (\ref{eq3.4}),  we have
	\begin{eqnarray}\label{eq3.7}
	 E[CPE_{\gamma}^{\psi}(\widehat{K}_{n})]=\frac{1}{\Gamma(\gamma+1)}\sum_{l=1}^{n-1}\frac{1}{(1+n)}\left(\frac{l}{n}\right)\left[-\ln \frac{l}{n}\right]^{\gamma}
	\end{eqnarray}
	and
	\begin{eqnarray}\label{eq3.8}
	 Var[CPE_{\gamma}^{\psi}(\widehat{K}_{n})]=\frac{1}{(\Gamma(\gamma+1))^{2}}\sum_{l=1}^{n-1}\frac{n}{(1+n)^2(2+n)}\left(\frac{l}{n}\right)^{2}\left[-\ln \frac{l}{n}\right]^{2\gamma}.
	\end{eqnarray}
\end{example}
Hereafter, we provide central limit theorems for the empirical WFGCPE when the random samples are drawn from $(i)$ a Weibull distribution with  $\psi(x)=x$ and $(ii)$ a general CDF $K(x)$ with $\psi(x)=k(x)=\frac{d}{dx}K(x).$

\begin{theorem}
	Consider a random sample $T$ from a population with PDF $k(x)=2\lambda x e^{-\lambda x^2},~x>0,~\lambda>0.$ Then, for $\gamma>0$ and $\psi(x)=x,$
	 $$\frac{CPE_{\gamma}^{\psi}(\widehat{K}_{n})-E(CPE_{\gamma}^{\psi}(\widehat{K}_{n}))}{\sqrt{Var(CPE_{\gamma}^{\psi}(\widehat{K}_{n}))}}\rightarrow N(0,1)$$  in distribution as $n\rightarrow\infty.$
\end{theorem}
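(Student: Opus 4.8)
The plan is to recognize the empirical WFGCPE as a weighted sum of independent random variables and then invoke a Lyapunov central limit theorem, the crux being the verification of the Lyapunov (equivalently, asymptotic negligibility) condition.

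First I would use the representation (\ref{eq3.2}). For $\psi(x)=x$ we have $\Psi(x)=x^{2}/2$, so $Z_{l}=\tfrac12(T_{l+1:n}^{2}-T_{l:n}^{2})$. Since $k(x)=2\lambda x e^{-\lambda x^{2}}$ makes $X^{2}$ exponential with mean $1/\lambda$, the values $T_{1:n}^{2}\le\cdots\le T_{n:n}^{2}$ are the order statistics of an i.i.d.\ exponential sample, and by the standard distribution theory of exponential spacings (\cite{pyke1965spacings}, as already used in the preceding example) the spacings $T_{l+1:n}^{2}-T_{l:n}^{2}$ are mutually independent and exponential with means $1/(\lambda(n-l))$. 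Hence $Z_{l}=E_{l}/(2\lambda(n-l))$ with $E_{1},\dots,E_{n-1}$ i.i.d.\ $\mathrm{Exp}(1)$, and (\ref{eq3.2}) reads
\[
CPE_{\gamma}^{\psi}(\widehat{K}_{n})=\sum_{l=1}^{n-1}c_{n,l}E_{l},\qquad c_{n,l}=\frac{1}{2\lambda\,\Gamma(\gamma+1)}\,\frac{1}{n-l}\Big(\frac{l}{n}\Big)\Big(-\ln\tfrac{l}{n}\Big)^{\gamma}.
\]
This exhibits the estimator as a sum of independent summands and reproduces the mean and variance formulas computed in the preceding examples.

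Next I would center and normalize by $s_{n}^{2}=\sum_{l}c_{n,l}^{2}=Var(CPE_{\gamma}^{\psi}(\widehat{K}_{n}))$ and apply the Lyapunov CLT to the triangular array $\{c_{n,l}(E_{l}-1)\}_{l=1}^{n-1}$. Because the exponential law has finite moments of every order, $E|c_{n,l}(E_{l}-1)|^{3}=C\,c_{n,l}^{3}$ with $C=E|E_{1}-1|^{3}<\infty$, so the Lyapunov ratio (for $\delta=1$) is
\[
\frac{1}{s_{n}^{3}}\sum_{l=1}^{n-1}E\big|c_{n,l}(E_{l}-1)\big|^{3}=C\,\frac{\sum_{l}c_{n,l}^{3}}{\big(\sum_{l}c_{n,l}^{2}\big)^{3/2}}\le C\Big(\frac{\max_{l}c_{n,l}^{2}}{s_{n}^{2}}\Big)^{1/2},
\]
so it suffices to prove the asymptotic negligibility $\max_{l}c_{n,l}^{2}/s_{n}^{2}\to0$, after which the Lyapunov CLT delivers the stated $N(0,1)$ limit.

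The hard part is precisely this negligibility estimate, because the weights are most delicate at the right endpoint $l\approx n$, where the exponential means $1/(2\lambda(n-l))$ blow up while $(-\ln(l/n))^{\gamma}$ collapses. Writing $l=n-j$ and using $-\ln(1-j/n)\sim j/n$ for small $j$ gives $c_{n,n-j}\sim C'\,j^{\gamma-1}n^{-\gamma}$, so the squared weights behave like $j^{2\gamma-2}n^{-2\gamma}$ near the boundary and like $n^{-2}$ in the interior. Splitting the sum into a boundary block $j\le\varepsilon n$ and an interior block and comparing each with its integral analogue gives
\[
s_{n}^{2}\sim n^{-1}\int_{0}^{1}t^{2}(-\ln t)^{2\gamma}(1-t)^{-2}\,dt,
\]
whose integrand is $\sim(1-t)^{2\gamma-2}$ at $t=1$ and hence integrable precisely when $\gamma>1/2$. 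In that regime $\max_{l}c_{n,l}^{2}=O(n^{-\min\{2\gamma,2\}})$ is of smaller order than $s_{n}^{2}\sim n^{-1}$, so $\max_{l}c_{n,l}^{2}/s_{n}^{2}\to0$ and the Lyapunov condition holds. The truly delicate case is $\gamma\le 1/2$: there the boundary terms dominate, a single summand near $l=n-1$ carries a non-vanishing fraction of $s_{n}^{2}$, and the negligibility condition (hence the classical CLT) can be expected to fail, so that range—if the limit is to be normal at all—would require a separate argument. Controlling this competition between $1/(n-l)$ and $(-\ln(l/n))^{\gamma}$ at the right endpoint is where I anticipate the main difficulty.
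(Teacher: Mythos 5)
Your approach is the same one the paper implicitly relies on: the paper omits the proof, deferring to Theorem 5.1 of \cite{kayal2019shift}, and that argument is precisely your reduction --- use (\ref{eq3.2}) and the distribution of the spacings $T_{l+1:n}^{2}-T_{l:n}^{2}$ to write $CPE_{\gamma}^{\psi}(\widehat{K}_{n})=\sum_{l=1}^{n-1}c_{n,l}E_{l}$ with $E_{l}$ i.i.d.\ standard exponential, then invoke a Lyapunov-type CLT for the triangular array. Your identification of the crux (the negligibility ratio $\max_{l}c_{n,l}^{2}/s_{n}^{2}$), your bound $\sum_{l}c_{n,l}^{3}\le(\max_{l}c_{n,l})\sum_{l}c_{n,l}^{2}$, and your boundary analysis $c_{n,n-j}\sim C'j^{\gamma-1}n^{-\gamma}$ are all correct.

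The restriction you flag at the end is not a defect of your argument but a genuine problem with the theorem as stated, which the paper's proof-by-citation hides: \cite{kayal2019shift} treats integer order $\gamma=n\ge 1$, where negligibility always holds, and the extension to all fractional $\gamma>0$ breaks down. For $0<\gamma<1/2$ one has $s_{n}^{2}=\Theta(n^{-2\gamma})$, dominated by the top spacings, and in fact $n^{\gamma}\bigl(CPE_{\gamma}^{\psi}(\widehat{K}_{n})-E[CPE_{\gamma}^{\psi}(\widehat{K}_{n})]\bigr)$ converges in distribution to $\frac{1}{2\lambda\Gamma(\gamma+1)}\sum_{j\ge1}j^{\gamma-1}(E_{j}-1)$, a convergent infinite sum of independent centered exponentials. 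By Cram\'{e}r's decomposition theorem this limit cannot be Gaussian (a Gaussian random variable cannot be an independent sum containing a non-Gaussian summand); equivalently, the third cumulant of the normalized statistic tends to $2\sum_{j\ge1}j^{3\gamma-3}/\bigl(\sum_{j\ge1}j^{2\gamma-2}\bigr)^{3/2}>0$. So the claimed $N(0,1)$ limit is false on $0<\gamma<1/2$, and your proof establishes the theorem exactly on its true range of validity. One small correction: at $\gamma=1/2$ the boundary block diverges logarithmically, giving $s_{n}^{2}\sim Cn^{-1}\ln n$ while $\max_{l}c_{n,l}^{2}=O(n^{-1})$, so negligibility and hence the CLT still hold there; the problematic range is strictly $\gamma<1/2$, not $\gamma\le 1/2$.
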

\begin{proof}
The proof is similar to that of Theorem $5.1$ of \cite{kayal2019shift}. Thus, it is omitted.
\end{proof}

\begin{theorem}
	Consider a random sample $T$ from a population with CDF $K(x).$ Then, for $\gamma>0$ and $\psi(x)=k(x),$
	 $$\frac{CPE_{\gamma}^{\psi}(\widehat{K}_{n})-E(CPE_{\gamma}^{\psi}(\widehat{K}_{n}))}{\sqrt{Var(CPE_{\gamma}^{\psi}(\widehat{K}_{n}))}}\rightarrow N(0,1)$$  in distribution as $n\rightarrow\infty.$
\end{theorem}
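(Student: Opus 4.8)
The plan is to establish asymptotic normality by expressing the empirical WFGCPE as a sum of functions of independent random variables and then invoking a triangular-array central limit theorem. The key observation is that when $\psi(x)=k(x)$, equation (\ref{eq3.2}) gives
\begin{eqnarray*}
CPE_{\gamma}^{\psi}(\widehat{K}_{n})=\frac{1}{\Gamma(\gamma+1)}\sum_{l=1}^{n-1}Z_{l}\left(\frac{l}{n}\right)\left(-\ln\frac{l}{n}\right)^{\gamma},
\end{eqnarray*}
where $Z_{l}=K(T_{l+1:n})-K(T_{l:n})$ are the uniform spacings. First I would recall the classical result of \cite{pyke1965spacings}: the transformed order statistics $U_{l:n}=K(T_{l:n})$ are the order statistics of a uniform sample on $(0,1)$, so the spacings $Z_{1},\ldots,Z_{n-1}$ admit the representation $Z_{l}=E_{l}/\sum_{j=1}^{n+1}E_{j}$, where $E_{1},\ldots,E_{n+1}$ are i.i.d.\ standard exponential. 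This reduces the statistic to an explicit function of i.i.d.\ exponentials, exactly the structure treated in Theorem $5.1$ of \cite{kayal2019shift}.

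Next I would center and scale. Writing $c_{l,n}=\frac{1}{\Gamma(\gamma+1)}\left(\frac{l}{n}\right)\left(-\ln\frac{l}{n}\right)^{\gamma}$ for the deterministic coefficients, the statistic becomes $\sum_{l=1}^{n-1}c_{l,n}Z_{l}$. Using the normalized-exponential representation, I would substitute $Z_{l}=E_{l}/S_{n}$ with $S_{n}=\sum_{j=1}^{n+1}E_{j}$, and since $S_{n}/(n+1)\to 1$ almost surely by the strong law, the randomness in the denominator contributes only a lower-order term after centering. The leading behavior is therefore governed by $\frac{1}{n+1}\sum_{l=1}^{n-1}c_{l,n}(E_{l}-1)$, a weighted sum of i.i.d.\ centered exponentials. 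The means $E[CPE_{\gamma}^{\psi}(\widehat{K}_{n})]$ and variances $Var[CPE_{\gamma}^{\psi}(\widehat{K}_{n})]$ are already given explicitly in (\ref{eq3.7}) and (\ref{eq3.8}), so the normalizing constants are known in closed form.

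The central analytic step is verifying a Lindeberg or Lyapunov condition for this triangular array of weighted exponentials. I would check Lyapunov's condition with third absolute moments: since the exponential distribution has all moments finite, it suffices to show that $\sum_{l=1}^{n-1}|c_{l,n}|^{3}E|E_{l}-1|^{3}$ divided by $\big(\sum_{l=1}^{n-1}c_{l,n}^{2}Var(E_{l})\big)^{3/2}$ tends to zero. Because $c_{l,n}$ scales like $1/n$ while the number of summands grows like $n$, the denominator is of order $n^{-3/2}$ and the numerator of order $n^{-2}$, giving a ratio that vanishes; the logarithmic weight $(-\ln(l/n))^{\gamma}$ is integrable and does not disturb these orders. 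The hard part will be controlling the dependence introduced by the common denominator $S_{n}$ uniformly in $n$ and confirming that the variance normalization does not degenerate; once the array is shown to satisfy Lyapunov's condition and the denominator fluctuation is shown to be asymptotically negligible after Slutsky's theorem, the stated convergence to $N(0,1)$ follows exactly as in Theorem $5.1$ of \cite{kayal2019shift}, and I would cite that argument to complete the proof.
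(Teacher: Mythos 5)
The paper itself gives no real argument for this theorem (its ``proof'' is a one-line deferral to Theorem 15 of \cite{tahmasebi2020extension}), so your sketch has to stand on its own, and it contains a genuine error. The reduction via Pyke's representation $Z_{l}=E_{l}/S_{n+1}$, $S_{n+1}=\sum_{j=1}^{n+1}E_{j}$, is the right starting point, and your Lyapunov-ratio computation for a weighted sum of i.i.d.\ centered exponentials is fine as far as it goes. What fails is the claim that, since $S_{n+1}/(n+1)\rightarrow 1$ almost surely, ``the randomness in the denominator contributes only a lower-order term,'' so that the leading behavior is $\frac{1}{n+1}\sum_{l}c_{l,n}(E_{l}-1)$. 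The fluctuation $S_{n+1}/(n+1)-1$ is $O_{p}(n^{-1/2})$, exactly the same order as the fluctuation of the numerator, and it cannot be discarded. Writing $\bar{c}_{n}=\frac{1}{n+1}\sum_{l=1}^{n-1}c_{l,n}$, exact algebra gives
\begin{equation*}
\sum_{l=1}^{n-1}c_{l,n}Z_{l}-\bar{c}_{n}=\frac{1}{S_{n+1}}\Big[\sum_{l=1}^{n-1}\big(c_{l,n}-\bar{c}_{n}\big)(E_{l}-1)-\bar{c}_{n}\big(E_{n}+E_{n+1}-2\big)\Big],
\end{equation*}
so the correct leading term carries the \emph{re-centered} weights $c_{l,n}-\bar{c}_{n}$ plus two edge terms. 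Your version would identify an asymptotic variance proportional to $\sum_{l}c_{l,n}^{2}$, whereas the true one is proportional to $\sum_{l}c_{l,n}^{2}-(n+1)\bar{c}_{n}^{2}$, which is strictly smaller (this is precisely the effect of the negative correlations among uniform spacings). With the wrong variance identified, the studentized ratio in the statement cannot be shown to converge to $N(0,1)$: normalizing your claimed leading term by the true standard deviation would produce a limit with variance strictly larger than $1$. Your final sentence concedes that controlling the common denominator $S_{n}$ is ``the hard part,'' but this contradicts the earlier dismissal, and that step is exactly where the argument as written breaks.

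The repair stays entirely within your framework: apply the Lyapunov CLT to the weighted sum of independent centered exponentials inside the bracket above (the weights $c_{l,n}-\bar{c}_{n}$ are bounded, so the same $O(n^{-1/2})$ Lyapunov ratio goes through, provided one checks that $\frac{1}{n}\sum_{l}(c_{l,n}-\bar{c}_{n})^{2}$ stays bounded away from zero, which holds because $u(-\ln u)^{\gamma}$ is non-constant on $(0,1)$), and then invoke Slutsky's theorem for the factor $(n+1)/S_{n+1}\rightarrow 1$. One further caution when matching this to the normalization in the statement: the spacings $Z_{l}$ are exchangeable Dirichlet components with strictly negative pairwise covariances, not independent Beta$(1,n)$ variables, so the variance in the denominator of the theorem must be the true variance of the statistic --- which agrees asymptotically with the re-centered computation above --- and not the ``independent spacings'' formula (\ref{eq3.8}), which overstates it.
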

\begin{proof}
	The proof is similar to that of Theorem $15$ of \cite{tahmasebi2020extension}. Thus, it is omitted.
\end{proof}
\section{Concluding remarks and some discussions}
In this paper, we have proposed a weighted fractional generalized cumulative past entropy of a nonnegative random variable having bounded support. A number of results for the proposed weighted fractional measure have been obtained when the weight is a general nonnegative function. It is noticed that WFGCPE is shift-dependent and can be written as the expectation of a decreasing function of the random variable. Some ordering results and bounds are established. Based on the properties, it can be seen that the proposed measure is a variability measure. Further, a connection between the proposed weighted fractional measure and the fractional calculus is provded. The weighted fractional generalized cumulative past entropy measure is studied for the proportional reversed hazards model. A nonparametric estimator of the weighted fractional generalized cumulative past entropy is introduced based on the empirical cumulative distribution function. Few examples are considered to compute mean and variance of the estimator. Finally, a large sample property of the estimator is studied.

The proposed measure is not appropriate when uncertainty is associated with past. Suppose a system has started working at time $t=0$. At a pre-specified inspection time say $t\in(0,s)$, the system is found to be down. Then, the random variable $X_{(t)}=X|X\le t$, where $t\in(0,s)$ is known as the past lifetime. The dynamic weighted fractional generalized cumulative past entropy of  $X_{(t)}$ is defined as
\begin{eqnarray}
CPE_{\gamma}^{\psi}(X;t)=\frac{1}{\Gamma(\gamma+1)}\int_{0}^{t}\psi(x)\frac{K(x)}{K(t)}\left(-\ln \frac{K(x)}{K(t)}\right)^{\gamma}dx,~~\gamma>0,~\psi(x)\ge0.
\end{eqnarray}
One can prove most of the similar properties for $CPE_{\gamma}^{\psi}(X;t)$ as established for the proposed measure given by (\ref{eq2.1}).
\\
\\
{\bf  Acknowledgements:} The author Suchandan Kayal acknowledges the partial financial support for this work under a grant MTR/2018/000350, SERB, India.
\\
\\
{\bf  Conflict of interest statement:} The authors declare that they do not have any conflict of interests.
\\
\\
{\bf  Data availability statement:} NA

\bibliography{sk}
\end{document}